\newtheorem{theorem}{Theorem}
\newtheorem{lemma}[theorem]{Lemma}
\newtheorem{proposition}[theorem]{Proposition}
\newtheorem{question}{Question}
\newtheorem{conjecture}{Conjecture}
\newcommand{\zloty}{z\l{}oty}
\begin{document}

\title{Optimal-size clique transversals in chordal graphs}
\author{Jacob W.~Cooper\thanks{Department of Mathematics and Statistics, McGill University, Burnside Hall, 805 Sherbrooke West, Montreal, QC, H3A~0B9, Canada. E-mail: {\tt jacob.cooper@mail.mcgill.ca}. Previous affiliation: Department of Computer Science, University of Warwick, Coventry CV4 7AL, UK.}\and
        Andrzej Grzesik\thanks{Department of Computer Science, University of Warwick, Coventry CV4 7AL, UK, and Faculty of Mathematics and Computer Science, Jagiellonian University, \L ojasiewicza 6, 30-348 Krak\'ow, Poland. E-mail: {\tt Andrzej.Grzesik@tcs.uj.edu.pl}. Previous affiliation: Institute of Informatics, University of Warsaw, ul. Banacha 2, 02-097 Warsaw, Poland. E-mail: {\tt grzesik@mimuw.edu.pl}.} \and
        Daniel Kr\'al'\thanks{Mathematics Institute, DIMAP and Department of Computer Science, University of Warwick, Coventry CV4 7AL, UK. E-mail: {\tt d.kral@warwick.ac.uk}.}}
\date{}	
\maketitle

\begin{abstract}
The following question was raised by Tuza in 1990 and Erd\H os et al.~in 1992:
if every edge of an $n$-vertex chordal graph $G$ is contained in a clique of size at least four,
does $G$ have a clique transversal, i.e.,~a set of vertices meeting all non-trivial maximal cliques, of size at most $n/4$?
We prove that every such graph $G$ has a clique transversal of size at most $2(n-1)/7$ if $n\ge 5$,
which is the best possible bound.
\end{abstract}

\section{Introduction}

We investigate a problem posed by Erd\H os et al.~\cite[Problem 4]{bib-erdos92+} and Tuza~\cite[Problem 1]{bib-tuza90}
on clique transversals in chordal graphs.
To state the problem, we require the following definitions. 
A graph is {\em chordal} if it contains no induced cycle of length four or more.
We define a {\em clique} to be a complete subgraph of a graph (here, we deviate from the standard definition) and 
call a clique {\em maximal} if it is an inclusion-wise maximal complete subgraph. 
A {\em $k$-clique} is a clique containing $k$ vertices; moreover we call a clique non-trivial if it is a $k$-clique for some $k\ge 2$.
A {\em clique transversal} of a graph $G$ is a set $U$ of vertices such that
every non-trivial maximal clique of $G$ contains a vertex from $U$.
Finally, a chordal graph $G$ is {\em $k$-chordal} if each edge of $G$ is contained in a $k$-clique;
however, a $k$-chordal graph can also contain maximal cliques with less than $k$ vertices.

Every $2$-chordal $n$-vertex graph has a clique transversal of size at most $n/2$~\cite{bib-aigner+}, and
every $3$-chordal $n$-vertex graph has a clique transversal of size at most $n/3$~\cite{bib-tuza90}.
Motivated by these results,
Erd\H os et al.~\cite[Problem 4]{bib-erdos92+} and Tuza~\cite[Problem 1]{bib-tuza90} posed the following.
\begin{question}
\label{quest}
Does every $4$-chordal graph with $n$ vertices have a clique transversal of size at most $n/4$?
\end{question}
\noindent Flotow~\cite{bib-flotow92} constructed counterexamples for some small values of $n$ and 
Andreae and Flotow~\cite{bib-andreae96+}
constructed arbitrarily large $n$-vertex $4$-chordal graphs admitting no clique transversal with fewer than $2n/7-O(1)$ vertices, 
conjecturing this to be tight up to a constant factor.
\begin{conjecture}
\label{conj}
Every $4$-chordal graph with $n$ vertices has a clique transversal of size at most $2n/7$.
\end{conjecture}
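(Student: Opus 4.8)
The plan is to prove the sharper bound $\tau(G)\le 2(n-1)/7$ announced in the abstract, where $\tau(G)$ denotes the minimum size of a clique transversal; since $2(n-1)/7<2n/7$ this implies the conjecture, and the extra $-1$ is precisely what makes an inductive argument close. I would argue by strong induction on $n$, after recording two structural consequences of $4$-chordality. First, every maximal clique has at least three vertices, since a maximal clique of size two would be an edge lying in no $K_4$. Second, if $C$ is a leaf of a clique tree of $G$ and $u$ is a \emph{private} (hence simplicial) vertex lying in no maximal clique other than $C$, then the edge from $u$ to any other vertex of $C$ must have its guaranteeing $K_4$ inside $C$, so $|C|\ge 4$. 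Thus leaf cliques are always large, which is exactly what gives room to reduce.

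The engine of the induction is a local reduction. Rewriting the target as $7\tau(G)\le 2n-2$, it suffices to find a vertex set $X$ and a set $T$ of $t$ vertices such that (i) $7t\le 2|X|$, (ii) $G-X$ is again $4$-chordal (or small enough for a base case), and (iii) $T$ meets every maximal clique of $G$ that intersects $X$; one then takes $U=T\cup U'$, where $U'$ is a transversal of $G-X$ given by induction. This is valid because a maximal clique of $G$ disjoint from $X$ stays maximal in $G-X$, hence is met by $U'$, while the cliques touching $X$ are handled by $T$. To produce $X$ and $T$ I would take a leaf clique $C$ with separator $S=C\cap C'$ towards its clique-tree neighbour $C'$ and private part $P=C\setminus S$, place $P$ into $X$, and draw the transversal vertices of $T$ from $S$ whenever possible, since a separator vertex simultaneously covers $C$ and the cliques lying beyond $S$. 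The numerical aim is that, because $|C|\ge 4$, a leaf clique together with the structure it forces yields at least $7$ deletable vertices per $2$ transversal vertices; the extremal graphs of Andreae and Flotow, with $n=7k+1$ and transversal number $2k$, show this ratio cannot be improved and pin down which local configurations are tight.

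The hard part is twofold. First, deletion need not preserve $4$-chordality: if $C$ is a leaf clique of size exactly $4$ with a single private vertex, removing that vertex can leave a triangle whose only $K_4$ was $C$ itself, so condition (ii) fails. Coping with this forces a genuine case analysis on the sizes of $C$, of $S$, and of $C'$, and on how several leaf cliques share a separator, and in the awkward cases one must delete a larger, carefully chosen cluster or argue that such a configuration already beats the ratio $2/7$. A clean alternative that avoids modifying the graph at all is to root the clique tree, never delete, and select transversal vertices by a fixed bottom-up rule, charging each selected vertex to at least $7/2$ vertices of the subtree it finishes off; the difficulty then migrates into designing the rule and the charging so that equality occurs exactly on the extremal gadgets. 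Second, one must control the additive constant so that the bound reads $2(n-1)/7$ and not merely $2n/7+O(1)$: this comes from treating the root clique and the small base cases (where $5\le n\le 8$ and the bound permits only a single transversal vertex until $n=8$) directly, which is where the saving of $1$ originates. I expect matching the constant exactly, rather than up to an additive term, to be the real obstacle, and the bulk of the work to be a disciplined enumeration of leaf-clique configurations verifying $7t\le 2|X|$ in each case.
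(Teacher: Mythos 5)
Your plan correctly identifies the target bound $2(n-1)/7$ and even the right general shape of the argument---a bottom-up traversal of a rooted clique tree with an amortized charge of $7/2$ vertices per transversal vertex, which is essentially what the paper does with its \zloty{} accounting. But as written the proposal contains no proof: both of your candidate engines are abandoned exactly where the difficulty sits. The deletion-based induction fails for the reason you yourself note (removing the private part of a leaf clique can destroy $4$-chordality, so the inductive hypothesis does not apply to $G-X$), and no repair is offered; the charging alternative is stated as a goal (``design the rule and the charging'') rather than as an argument. The paper's proof is precisely the execution of that second plan, and its substance lies in two ideas absent from your outline. The first is a deferral mechanism: when a leaf clique is processed one generally cannot yet decide which of its vertices should enter the transversal, so the paper's algorithm leaves behind \emph{distinguished} pairs and triples carrying prescribed amounts of currency (three and two \zloty{}s respectively), with the invariant that some member of each tuple is eventually colored; the entire case analysis (Rules B1--B5, G1--G5, T1--T5) exists to preserve this invariant while the books balance. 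Without such a mechanism, a fixed bottom-up selection rule cannot achieve the ratio $7/2$, because the correct choice inside a leaf clique depends on cliques discovered only higher up the tree.

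Second, and more fundamentally, your outline never confronts maximal $3$-cliques, which are the actual bottleneck. A triangle that is a maximal clique, each of whose edges lies in a $K_4$, can exist (the triangles $b_ib_i'b_i''$ in the extremal graphs $H_k$ are of this kind); under a flat charge of two units per vertex it brings only six units yet may demand its own transversal vertex, so a purely local count of the form $7t\le 2|X|$ cannot absorb it. The paper handles this globally: each maximal $3$-clique is granted one extra \zloty{}, so the raw bound becomes $(2n+t)/7$ where $t$ is the number of maximal $3$-cliques (Proposition~\ref{prop-zlotys}), and then a separate structural argument recovers the loss---every nice tree-decomposition has at least $t+2$ branches (Proposition~\ref{prop-branch}) and at least one \zloty{} can be saved per branch (Lemma~\ref{lm-branch}), giving savings $s\ge t+2$ and hence the bound $2(n-1)/7$. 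This branch-counting step, together with the entirely separate minimal-counterexample analysis required when $G$ has no maximal $3$-clique at all (Theorem~\ref{thm-main-B}), is where the additive constant you worry about is actually won; none of it is visible from the leaf-clique enumeration you propose, so the proposal as it stands is a research plan whose hard cases remain open.
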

\noindent This conjecture also appears as~\cite[Problem 77]{bib-tuza01}.
Andreae~\cite{bib-andreae98} proved this conjecture when restricting to $4$-chordal graphs with every maximal clique 
containing at most four vertices.

In this paper, we prove Conjecture~\ref{conj} and
determine the optimal function of $n$ for which Question~\ref{quest} holds true, that is we prove the following.
\begin{theorem}
\label{thm-main}
Every $4$-chordal graph $G$ with $n\ge 5$ vertices
has a clique transversal of size at most $\lfloor 2(n-1)/7\rfloor$.
\end{theorem}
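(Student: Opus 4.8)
The plan is to prove the bound by strong induction on $n$, using the clique tree of the chordal graph $G$ together with a reduction that deletes vertices while preserving $4$-chordality. First I would record the structural facts I need. Since every edge lies in a $4$-clique, every non-trivial maximal clique has at least four vertices, and the maximal cliques can be arranged in a clique tree $T$. For a leaf $K$ of $T$ with parent $K'$, write $S = K \cap K'$ for the separator and $P = K \setminus S$ for the \emph{private} vertices of $K$; these are exactly the vertices whose closed neighbourhood equals $K$, so they are simplicial and $|P| \ge 1$. The key preliminary lemma is that deleting all of $P$ preserves $4$-chordality: any edge of $G - P$ that in $G$ relied on a $4$-clique meeting $P$ must itself lie inside $S \subseteq K'$, and $K'$ is a $4$-clique disjoint from $P$, so that edge is still contained in a $4$-clique. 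Moreover, the maximal cliques of $G - P$ are precisely those of $G$ other than $K$.

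With this in hand, the reduction is clear when $K$ has many private vertices. If $|P| \ge 4$, I would delete $P$, apply induction to the $4$-chordal graph $G - P$ to obtain a transversal $U'$ covering every maximal clique except $K$, and set $U = U' \cup \{s\}$ for an arbitrary $s \in K$ so as to also cover $K$. Since $|P| \ge 4$ gives $2|P|/7 \ge 8/7 > 1$, we obtain $|U| \le \lfloor 2(n - |P| - 1)/7\rfloor + 1 \le \lfloor 2(n-1)/7\rfloor$, as required. The same estimate shows that covering a clique ``for free'' (when the inductively obtained transversal already meets $S$) is even more favourable, and that deleting at least $7/2$ vertices for each transversal vertex we commit is exactly the ratio that must be maintained throughout.

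The hard part will be the leaf cliques with few private vertices, $1 \le |P| \le 3$, where deleting $P$ and paying a single vertex yields a ratio worse than $2/7$. Here I would not process $K$ in isolation but instead examine a larger neighbourhood of $T$: its parent $K'$ and the other leaf children of $K'$. The idea is to exploit that a small $P$ forces a large separator $S \subseteq K'$, so that a single vertex of $S$ can simultaneously cover $K$ and $K'$, and to batch together the deletions of the private vertices of several children of $K'$ (together with private vertices of $K'$ that become redundant once $K'$ has been covered), so that the number of deleted vertices per committed transversal vertex again reaches $7/2$. Carrying this out requires a case analysis over the possible sizes of the separators and over the number and private sizes of the children of $K'$, each time verifying via the lemma above that the deleted set preserves $4$-chordality; this exhaustive configuration analysis, and the tuning needed to hit the exact constant $2/7$, is where I expect the main difficulty to lie. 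Finally I would dispose of the additive $-1$ and the floor through the base cases $5 \le n \le c$ for a suitable constant $c$, where the slack of one vertex in $2(n-1)/7$ can be checked directly.
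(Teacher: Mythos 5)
There is a fatal error at the very first step: your claim that ``since every edge lies in a $4$-clique, every non-trivial maximal clique has at least four vertices'' is false. In a $4$-chordal graph a triangle $\{x,y,z\}$ can be a maximal clique even though each of its three edges extends to a $4$-clique, because the three extensions may go through three different vertices with no common neighbour of $x,y,z$. These maximal $3$-cliques are not a fringe case: the extremal graphs $H_k$ of Proposition~\ref{prop-lower} are built around exactly such triangles (the triples $b_i,b_i',b_i''$), so the configurations your induction never considers are precisely the ones that force the answer up to $2(n-1)/7$. The error also breaks your reduction lemma, including its ``easy'' case $|P|\ge 4$: take $K=\{x,y,p_1,\ldots,p_4\}$ a leaf clique with separator $S=\{x,y\}$, where $\{x,y,z\}$ is a maximal triangle and the edges $xz$, $yz$ lie in $4$-cliques $\{x,z,u_1,u_2\}$ and $\{y,z,w_1,w_2\}$. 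This graph is $4$-chordal, but after deleting $P$ the edge $xy$ lies in no $4$-clique at all, so $G-P$ is not $4$-chordal and the induction hypothesis cannot be applied. Handling maximal $3$-cliques is in fact the bulk of the paper's work: it counts them, shows every nice tree-decomposition has at least $t+2$ ``branches'' (Proposition~\ref{prop-branch}), and saves potential on each branch (Lemma~\ref{lm-branch}).

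Second, even within the (unjustified) no-maximal-$3$-clique setting, what you defer as ``where I expect the main difficulty to lie'' --- leaves with $1\le|P|\le 3$ --- is essentially the entire theorem, and the repair you sketch (a one-shot local analysis of a parent $K'$ and its leaf children) cannot reach the required ratio of $7/2$ deleted vertices per transversal vertex. The near-extremal graphs have chain-like clique trees in which every clique has only one or two private vertices, so no bounded neighbourhood of a leaf's parent contains enough deletable vertices to pay for the vertex you commit; the surplus has to be amortized along arbitrarily long paths of the tree, and the right vertex to place in the transversal depends on information that only becomes available much later in the process. This is exactly why the paper avoids one-shot choices: it processes the tree leaf-to-root while maintaining \emph{distinguished pairs and triples}, i.e., small cliques in which the decision of which vertex joins the transversal is postponed, and it closes the count with a global charging argument (Proposition~\ref{prop-zlotys}) in which each transversal vertex costs seven units; even then, the case without maximal $3$-cliques needs a separate minimal-counterexample analysis (Theorem~\ref{thm-main-B}) to improve $2n/7$ to $2(n-1)/7$. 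Your proposal contains neither the deferral mechanism nor any substitute for it, so the core of the proof is missing.
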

\noindent As shown in Proposition~\ref{prop-lower},
the bound proven in Theorem~\ref{thm-main} is best possible for every $n\ge 5$.

\section{Tree-decompositions of chordal graphs}

It is well-known that every chordal graph is an intersection graph of subtrees of a tree~\cite{bib-gavril74}.
Such intersection representations lead to tree-decompositions, which we now define;
we remark that the notion that we use differs from tree-decompositions related to graph tree-width.
A {\em tree-decomposition} of a graph $G$ is a tree $T$ such that
\begin{itemize}
\item each node $u$ of $T$ is {\em associated} with a subset $V_u$ of vertices of $G$,
\item two vertices $v$ and $v'$ of $G$ are joined by an edge if and only if there exists a node $u$ such that $\{v,v'\}\subseteq V_u$, and
\item for every vertex $v$ of $G$, the nodes $u$ with $v\in V_u$ induce a subtree of $T$.
\end{itemize}
A graph $G$ admits a tree-decomposition $T$ if and only if $G$ is chordal.
We will always refer to the vertices of a tree-decomposition as nodes in order to distinguish clearly
between the vertices of graphs and tree-decompositions.
In addition, we will generally use the letter $u$ (with various subscripts and superscripts) for nodes,
$U$ for sets of nodes, $v$ for vertices, and $V$ for sets of vertices.

The vertices associated with a node $u$ of a tree-decomposition of a graph $G$
form a clique in $G$; we will say that $u$ {\em corresponds} to this clique.
The Helly property of subtrees of a tree implies that
every maximal clique corresponds to a node of a tree-decomposition.
However, not all nodes of a tree-decomposition need correspond to a maximal clique.
We now state the following folklore lemma that asserts it is possible to modify a tree-decomposition of a chordal graph such that
each node corresponds to a maximal clique.

\begin{lemma}
\label{lm-maximal}
Every chordal graph $G$ has a tree-decomposition $T$ such that
each node of $T$ corresponds to a maximal non-trivial clique of $G$, and
all the nodes of $T$ are associated with different subsets of vertices of $G$.
\end{lemma}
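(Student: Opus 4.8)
The plan is to start from any tree-decomposition $T$ of the chordal graph $G$, whose existence is guaranteed by the characterisation quoted above, and then massage it in two stages: first eliminate nodes that do not correspond to maximal cliques, and second eliminate duplicated associated sets. For the first stage, I would observe that if a node $u$ is associated with a set $V_u$ that is \emph{not} inclusion-wise maximal among the $V_w$, then by the Helly property there is a neighbouring node $w$ whose associated set strictly contains $V_u$; more carefully, since every maximal clique already corresponds to some node, each non-maximal node can be absorbed into an adjacent node. The clean way to formalise this is the standard contraction operation: if $u$ and $w$ are adjacent in $T$ with $V_u\subseteq V_w$, contract the edge $uw$, discarding $u$ and keeping $w$ (with its set $V_w$). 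One checks that contracting such an edge preserves all three tree-decomposition axioms — the vertex sets of $G$ realised as cliques are unchanged, adjacency in $G$ is preserved because every pair that appeared together in $V_u$ also appears together in $V_w$, and the subtree condition for each vertex $v$ is preserved because the nodes containing $v$ still induce a connected subtree after the contraction.

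The key claim driving the first stage is that \emph{some} such contractible edge exists whenever a non-maximal node remains. I would argue this as follows: take any node $u$ whose set $V_u$ is not a maximal clique of $G$, so $V_u$ is properly contained in some maximal clique $K$; the clique $K$ corresponds to a node $u_K$ by the Helly property. Along the path in $T$ from $u$ to $u_K$, the subtree condition forces every vertex of $V_u$ to appear at every node (since each vertex of $V_u\subseteq K$ lies in both $V_u$ and $V_{u_K}$), so $V_u\subseteq V_w$ for the neighbour $w$ of $u$ on this path. Hence an edge incident to $u$ is contractible. Repeating the contraction decreases the number of nodes, so the process terminates, and at termination every remaining node corresponds to a maximal clique. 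I would also note that nodes corresponding to trivial cliques (single vertices or, if we wish, non-maximal edges) are swallowed in exactly the same way, since a trivial maximal clique arises only from an isolated vertex, which for $n\ge 2$ connected components can be handled directly; the statement restricts attention to maximal non-trivial cliques, so any leftover node whose set has size at most one is contracted into its neighbour.

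For the second stage, suppose two distinct nodes $u$ and $w$ of the resulting tree are associated with the \emph{same} set $V_u=V_w$. By the subtree condition applied to each vertex of this common set, the path between $u$ and $w$ consists entirely of nodes whose associated sets contain $V_u$; since each such set is a maximal clique and contains the maximal clique $V_u$, each equals $V_u$. Thus all nodes along the $u$–$w$ path carry the identical set, and I can contract that whole path to a single node without violating any axiom. Doing this for every repeated set leaves a tree-decomposition in which all associated sets are pairwise distinct, as required.

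The main obstacle I anticipate is purely bookkeeping rather than conceptual: verifying that the contraction operation genuinely preserves the subtree (connectivity) condition for \emph{every} vertex of $G$ simultaneously, including vertices not lying in the contracted sets, and checking that no new adjacencies in $G$ are spuriously created. Once the invariant ``contracting an edge $uw$ with $V_u\subseteq V_w$ preserves all three axioms'' is established as a self-contained sublemma, both stages follow by repeated application and a termination argument on the number of nodes. Since the lemma is folklore, I would keep the write-up brief, stating the contraction invariant explicitly and then applying it, rather than re-deriving the Helly property from scratch.
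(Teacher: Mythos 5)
Your proposal is correct and is essentially the paper's own argument in algorithmic form: the paper takes a tree-decomposition with the \emph{minimum} number of nodes and derives maximality (and implicitly distinctness) by exactly the same Helly-property-plus-path-connectivity argument you use to exhibit a contractible edge, so your iterative contraction is just the extremal choice unrolled into a terminating process. The only difference is cosmetic (your explicit second stage for duplicate sets versus the paper's appeal to minimality), and the isolated-vertex corner case you flag is glossed over in the paper as well.
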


\begin{proof}
Let $T$ be a tree-decomposition of $G$ with the minimum number of nodes.
The minimality of $T$ implies that no node is associated with a single vertex,
i.e., all cliques associated with nodes of $T$ are non-trivial.
Let $u$ be a node of $T$ and let $C$ be the clique corresponding to $u$.
We will show that $C$ is a maximal clique.
If $u$ had a neighbor $u'$ such that every vertex associated with $u$ were also associated with $u'$,
we could contract the edge $uu'$ and obtain a smaller tree-decomposition of $G$,
which is impossible by the choice of $T$.
Hence, for every neighbor $u'$, there exists a vertex $v$ associated with $u$ but not with $u'$.

Suppose that $C$ is not a maximal clique, i.e., there exists a clique $C'$ of $G$ containing $C$.
Let $u''$ be the node that corresponds to $C'$ and let $u'$ be the neighbor of $u$ on the path from $u$ to $u''$.
Furthermore, let $v$ be a vertex associated with $u$ but not with $u'$.
Since the nodes associated with $v$ induce a subtree, the vertex $v$ is not associated with the node $u''$.
Consequently, $C'$ does not contain $v$. We conclude that the clique $C$ is maximal.
\end{proof}

In the proof of Theorem~\ref{thm-main}, we require a particular type of tree-decomposition, which we now define.
A tree-decomposition of a $4$-chordal graph is {\em nice} if it is rooted and satisfies the following:
\begin{itemize}
\item no two adjacent nodes correspond to the same clique,
\item if a node $u$ corresponds to a $k$-clique $C$, then $k\ge 3$ and if $k=3$, then $C$ is maximal,
\item if a node $u$ corresponds to a $k$-clique, $k\ge 5$ and $u$ is not the root, then
      $k-1$ vertices associated with $u$ are also associated with the parent of $u$, and
\item if $G$ contains a maximal $3$-clique, then the root node corresponds to a maximal $3$-clique.
\end{itemize}
Observe that each leaf of a nice tree-decomposition of a $4$-chordal graph
corresponds to a $k$-clique with $k\ge 4$.

We now show that every $4$-chordal graph admits a nice tree-decomposition,
even with an additional restriction on the clique corresponding to its root.

\begin{proposition}
\label{prop-nice}
Let $G$ be a $4$-chordal graph and let $C$ be a maximal $3$-clique of $G$, if it exists;
otherwise let $C$ be a clique of $G$ of order at least four.
Then, $G$ admits a nice tree-decomposition such that its root corresponds to $C$.
\end{proposition}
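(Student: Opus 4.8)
The plan is to start from the clique tree provided by Lemma~\ref{lm-maximal} and then repair it locally so that the overlap condition for large cliques holds. Write $T_0$ for a tree-decomposition of $G$ as in Lemma~\ref{lm-maximal}: every node corresponds to a maximal clique, all bags are distinct, and every bag has size at least $3$ (by minimality no bag is a single vertex, and since $G$ is $4$-chordal no maximal clique can have size $2$). Because every maximal clique of a chordal graph occurs as a bag of such a clique tree, the condition that adjacent nodes have distinct bags and the condition on small cliques (size $\ge 3$, and every $3$-bag maximal) already hold for $T_0$; the two remaining tasks are to root $T_0$ at a node corresponding to $C$ and to enforce the parent-overlap condition for cliques of size at least $5$.

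For the root: if $C$ is maximal (in particular whenever $C$ is a maximal $3$-clique), it is a bag of $T_0$, and I root $T_0$ there, which immediately gives the root condition. If $C$ is a non-maximal clique of order at least four (which can only occur when $G$ has no maximal $3$-clique), I choose a maximal clique $M\supseteq C$, root $T_0$ at the node of $M$, and prepend a new root node whose bag is $C$, made adjacent to the node of $M$. Since $C\subseteq M$, one checks directly that the subtree and edge conditions of a tree-decomposition are preserved, and the new root has order $\ge 4$, so it is a legal root bag.

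The main step is the third bullet of the definition. I process the rooted tree and, for every child $u$ whose clique $C_u$ has size $k\ge 5$ and whose parent $p$ satisfies $|C_u\cap C_p|<k-1$, I subdivide the edge $pu$ by inserting a path of new nodes. Writing $S=C_u\cap C_p$, these new bags form a chain $S\subseteq X_1\subsetneq X_2\subsetneq\cdots\subsetneq X_t\subsetneq C_u$ of subsets of $C_u$, each obtained from the previous one by adding a single vertex of $C_u\setminus S$, ending at $X_t=C_u\setminus\{v\}$ of size $k-1$. Then the new parent $X_t$ of $u$ shares $k-1$ vertices with $C_u$, and each inserted node shares all but one vertex with its own parent (including $X_1$ with $p$, as $X_1\cap C_p=S$), so the overlap condition holds along the whole chain. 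Every inserted bag is a proper subset of the clique $C_u$, hence non-maximal, so I must ensure it has size at least $4$: this is automatic when $|S|\ge 3$ (the smallest bag $X_1$ has size $|S|+1$), and when $|S|\le 2$ I start the chain at a size-$4$ subset of $C_u$ containing $S$ rather than at $S\cup\{v_1\}$. Because each $X_i\subseteq C_u$, no new adjacencies are created and the subtree condition is preserved, since vertices of $S$ lie in every bag of the inserted path, vertices of $C_u\setminus S$ lie in a suffix of it, and vertices outside $C_u$ lie in none. The same subdivision, applied to the edge from the new root $C$ down to $M$, repairs the overlap between $C$ and $M$ in the non-maximal case.

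The step I expect to be the main obstacle is exactly this subdivision: one must verify simultaneously that the inserted chain keeps the object a valid tree-decomposition (the connectivity/subtree axiom is the delicate part) and that every inserted bag has size at least four, which is what forces the separate treatment of small separators $|S|\le 2$. Everything else, namely distinctness of adjacent bags, the condition that every $3$-bag is maximal, and the placement of the root, follows directly from Lemma~\ref{lm-maximal} and the choice of root.
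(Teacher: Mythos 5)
Your proof is correct and follows essentially the same approach as the paper: start from the clique tree of Lemma~\ref{lm-maximal}, attach (or select) a root for $C$, and subdivide each violating parent--child edge with a chain of intermediate bags of size at least four that each differ from their neighbor by a single vertex. The only differences are bookkeeping: you insert each whole chain in one shot (so no termination argument is needed, whereas the paper inserts one node at a time and uses a potential function) and you avoid the paper's final contraction of adjacent duplicate bags by only adding a new root node when $C$ is non-maximal.
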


\begin{proof}
Let $T$ be a tree-decomposition of $G$ such that each node of $T$ corresponds to a maximal non-trivial clique,
which exists by Lemma~\ref{lm-maximal}.
We construct a nice tree-decomposition of $G$ from $T$.

We first introduce a new node $r$ and associate it with the vertices of $C$.
We join $r$ to a node of $T$ that corresponds to a clique containing $C$ (if $C$ is maximal,
then this node corresponds to $C$ itself), and
root the tree-decomposition at $r$.
Observe that the second and fourth properties from the definition of a nice tree-decomposition hold in this modified tree-decomposition.

While the modified tree-decomposition contains a node $u$ associated with vertices $v_1,\ldots,v_k$, $k\ge 5$, such that
two of these vertices, say $v_{k-1}$ and $v_k$, are not associated with the parent $u'$ of $u$, we proceed as follows:
we introduce a new node $u''$ to be both a child of $u'$ and the parent of $u$, 
associating $u''$ with $v_1,\ldots,v_{k-1}$.
This process terminates since the sum of the squares of the sizes of the symmetric difference of vertex sets associated
with adjacent nodes in the tree-decomposition decreases in each step. 

This tree-decomposition now satisfies the second, third and fourth properties from the definition of a nice tree-decomposition.
If the tree-decomposition contains two adjacent nodes corresponding to the same clique,
we contract the edge joining them. After this process terminates, we have a nice tree-decomposition of $G$
rooted at a node corresponding to the clique $C$.
\end{proof}

\section{Clique transversals in chordal graphs}

To prove our main result, we design an algorithm that constructs a small clique transversal of a given $4$-chordal graph $G$, 
whose input is $G$ together with any one of its nice tree-decompositions.
During the course of the algorithm, nodes of the input nice tree-decomposition will be removed sequentially, 
whilst simultaneously in $G$, selected pairs and triples of vertices will become temporarily distinguished, determining which 
vertices are to be removed or colored red with every iteration. 
Distinguished pairs and triples will be collectively referred to as distinguished tuples, and 
it will hold that:
\begin{itemize}
\item a distinguished tuple will never contain a red vertex,
\item the vertices of each distinguished tuple will induce a complete subgraph, and
\item at least one vertex of a distinguished tuple will eventually become red.
\end{itemize}
The red vertices will form the sought clique transversal.
We will refer to an algorithm of this kind as a {\em clique transversal algorithm}.

To bound the size of the constructed clique transversal, we apply a double counting argument.
Initially, each vertex will be assigned two \zloty{}s and
each node of the input nice tree-decomposition corresponding to a maximal $3$-clique will be assigned one \zloty{}.
As each vertex of the chordal graph is removed or colored red,
its assigned \zloty{}s will be reassigned in accordance with the algorithm.
Whenever a vertex is colored red, seven \zloty{}s will be removed from the graph---these seven \zloty{}s will be referred to as {\em paid} for coloring a vertex red.
It may also occur that some \zloty{}s are removed from the graph without 
creating a red vertex---these \zloty{}s will be referred to as {\em saved}.
In addition, each distinguished pair will always be assigned three \zloty{}s and
each distinguished triple two \zloty{}s at the instant of its creation.
When any distinguished tuple ceases to exist, its \zloty{}s will also be reassigned.

To aid the accessibility of our arguments,
we will sequentially design three algorithms with the properties above.
To establish a further notion, a clique transversal algorithm will be denoted {\em cash-balanced}
if it abides by the rules presented above for reassigning \zloty{}s.

\begin{proposition}
\label{prop-zlotys}
Let $G$ be a $4$-chordal graph and let $X$ be the clique transversal produced by a clique transversal algorithm that is cash-balanced.
If $n$ is the number of vertices of $G$, $t$ is the number of maximal $3$-cliques of $G$ and $s$ is the number of \zloty{}s saved by the algorithm, then
$$|X|=\frac{2n+t-s}{7}\;\mbox{.}$$
\end{proposition}

\begin{proof}
Initially, $2n$ \zloty{}s are assigned amongst the vertices of $G$ and
$t$ \zloty{}s are assigned to maximal $3$-cliques. 
Since $s$ \zloty{}s are saved during the course of the algorithm, it stands that $2n+t-s$ \zloty{}s have been paid for red vertices.
Since exactly seven \zloty{}s are paid for each red vertex,
the bound on the size of $X$ follows.
\end{proof}

\subsection{Basic algorithm}

We begin by presenting a clique-transversal algorithm that may fail to save any \zloty{}s.
We will then modify this algorithm for the case of $4$-chordal graphs with maximal $3$-cliques and
finally the case of $4$-chordal graphs with no maximal $3$-cliques.
The algorithm that we design in this subsection will be referred to as the {\em basic algorithm}.

The basic algorithm processes the nodes of an input nice tree-decomposition of a $4$-chordal graph from the leaves towards the root using the rules we now describe.
Each time a node is processed, it is removed from the tree-decomposition;
moreover, the vertices of the graph associated with only that node are also removed.
Note that the tree-decomposition remains nice after this operation.

Let $u$ be a leaf node of the tree-decomposition,
let $U$ be the set of vertices associated with $u$, and
let $U'\subseteq U$ be the set of vertices associated with $u$ but not with its parent.
We analyze four cases concerning the possible sizes of $U$ and $U'$;
we start with the case corresponding to the most common scenario.

{\bf Case 1: The size of $U$ is at least four and the size of $U'$ is one.}

Let $v$ be the single vertex contained in $U'$.
The algorithm follows the first of the following rules that applies 
and then removes both 
the node $u$ from the tree-decomposition and the vertex $v$ from the graph.
We remark that at least one \zloty{} is saved when one of Rules G1--G5 is applied,
so we will refer to these five rules as {\em good} rules.
Similarly, rules B1--B5 will be referred to as {\em bad} rules (since no \zloty{}s are saved).

\begin{description}
\item[Rule B1.] If the vertex $v$ is red, the algorithm performs no additional steps.
\item[Rule G1.] If the vertex $v$ is contained in at least three distinguished tuples, it becomes red.
                The tuples containing $v$ cease to exist and their \zloty{}s are removed.
		Each of the tuples has at least two \zloty{}s and $v$ has an additional two \zloty{}s itself.
                Hence, seven \zloty{}s are paid for coloring $v$ red and at least one \zloty{} is saved.
\item[Rule G2.] If the vertex $v$ is contained in at least two distinguished pairs, it becomes red.
                The pairs containing $v$ cease to exist and their \zloty{}s are removed.
                As in the case of Rule G1, at least one \zloty{} is saved.
\item[Rule B2.] If the vertex $v$ is contained in a distinguished pair and a distinguished triple,
                it becomes red and the two tuples cease to exist. No \zloty{}s are saved.
\item[Rule B3.] If the vertex $v$ is contained in two distinguished triples,
                the remaining two vertices of each triple will become a distinguished pair. 
                Each pair keeps the \zloty{}s of the original triple it was contained in and
                is reassigned one additional \zloty{} from $v$.
\item[Rule G3.] If the vertex $v$ is contained in a distinguished triple,
                the remaining two vertices of the triple become a distinguished pair.
                This pair keeps the two \zloty{}s of the original triple and
                gets one \zloty{} from $v$. The other \zloty{} of $v$ is saved.
\item[Rule G4.] If the vertex $v$ is contained in a distinguished pair and
                the other vertex, call it $v'$, of that pair is contained in another distinguished tuple,
                the vertex $v'$ becomes red. The tuples containing $v'$ cease to exist.
		Since we remove at least nine \zloty{}s (the four \zloty{}s of $v$ and $v'$ and
		at least five \zloty{}s of the tuples),
                at least two \zloty{}s are saved.
\item[Rule B4.] If the vertex $v$ is contained in a distinguished pair,
                the other vertex of the pair becomes red and the pair ceases to exist.
\item[Rule G5.] If the vertex $v$ is not in a tuple and one of the vertices of $U\setminus\{v\}$ is red,
                the algorithm also performs no additional steps;
                the two \zloty{}s of $v$ are saved.
\item[Rule B5.] If the vertex $v$ is not in a tuple and no vertex of $U\setminus\{v\}$ is red, 
				an arbitrary three vertices of $U\setminus\{v\}$ form a distinguished triple and
                this triple is reassigned two \zloty{}s from $v$.
\end{description}

{\bf Case 2: The size of $U$ is at least four and the size of $U'$ is greater than one.}

We apply some of the rules from Case 1.
As long as $U'$ contains a non-red vertex that is not contained in any distinguished tuple (and that has not been processed),
we process it, i.e., we use Rule B5 or G5.
We then use the first applicable rule to process each remaining vertex in $U'$ in an arbitrary 
order, followed by removing the node $u$ from the tree-decomposition.

{\bf Case 3: The size of $U$ is three and the size of $U'$ is one.}

Let $v$ be the single vertex contained in $U'$.
The algorithm follows the first of the following rules that applies and
then removes the node $u$ from the tree-decomposition and the vertex $v$ from the graph.
\begin{description}
\item[Rule T1.] If the vertex $v$ is red, the algorithm performs no additional steps.
\item[Rule T2.] If the vertex $v$ is contained in at least two distinguished tuples,
                it becomes red and all the tuples containing $v$ cease to exist. 
		Coloring $v$ red is paid using the two \zloty{}s of $v$,
		at least four \zloty{}s from the tuples containing $v$ and
		the one \zloty{} of node $u$.
\item[Rule T3.] If the vertex $v$ is contained in exactly one distinguished tuple,
                an arbitrary vertex $v'$ of the tuple different from $v$ becomes red and
                all distinguished tuples containing $v'$ cease to exist.
                Note that at least seven \zloty{}s are removed with this rule:
                the two \zloty{}s of the vertex $v$,
                the two \zloty{}s of the new red vertex,
                at least two \zloty{}s from the ceased tuples, and
                the one \zloty{} of the node $u$.
\item[Rule T4.] If $U$ contains a red vertex, the algorithm performs no additional steps;
                one \zloty{} assigned to the node $u$ is saved.
\item[Rule T5.] The other two vertices of $U$ become a distinguished pair,
                which is assigned the two \zloty{}s of $v$ and the one \zloty{} of node $u$.
\end{description}

{\bf Case 4: The size of $U$ is three and the size of $U'$ is greater than one.}

Let $v$ be an arbitrary vertex of $U'$.
We start with following the first applicable rule among the Rules T1--T5 with respect to $v$;
note that at least one of the vertices of $U$ is now red.
If the two vertices of $U$ different from $v$ do not form a distinguished pair,
we next remove the remaining vertices of $U'$ from the graph and also remove the node $u$ from the tree-decomposition.

We now assume that the two vertices of $U$ different from $v$ form a distinguished pair.
Let $v'$ be the vertex of $U\setminus U'$ if it exists;
otherwise, it holds $U=U'$ and let $v'$ be an arbitrary vertex of $U$ different from $v$.
The vertex $v'$ is now colored red and all distinguished pairs containing $v'$ cease to exist.
The remaining vertices of $U'$ and the node $u$ are then removed. 
Note that at least seven \zloty{}s are removed: the four \zloty{}s of the two vertices $v$, $v'$ and
three \zloty{}s from the distinguished pair.

This concludes the description of the basic algorithm.
Its description yields that the basic algorithm is a cash-balanced clique transversal algorithm.
Note that if we apply the basic algorithm to a nice tree-decomposition of a $4$-chordal graph $G$ with no maximal $3$-cliques,
we get a clique transversal of $G$ of size at most $2|G|/7$ by Proposition~\ref{prop-zlotys}.

\subsection{Chordal graphs with maximal $3$-cliques}

A {\em branch} is a rooted subtree of a nice tree-decomposition $T$ of a $4$-chordal graph such that:
\begin{itemize}
\item its root $r$ corresponds to a $4$-clique,
\item the parent of $r$ corresponds to a clique that shares two vertices with $r$,
\item the branch contains all descendants of $r$ in $T$, and
\item no descendant of $r$ corresponds to a $3$-clique.
\end{itemize}
The following proposition easily follows from the definition of a nice tree-decomposition.

\begin{proposition}
\label{prop-branch}
Let $G$ be a $4$-chordal graph with $t\ge 1$ maximal $3$-cliques.
Every nice tree-decompo\-si\-tion of $G$ has at least $t+2$ branches.
\end{proposition}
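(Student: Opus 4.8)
The plan is to count branches by relating them to the maximal $3$-cliques of $G$ together with the root of the nice tree-decomposition. Recall that a branch is a maximal rooted subtree consisting entirely of nodes corresponding to cliques of order at least four, whose topmost node $r$ corresponds to a $4$-clique sharing exactly two vertices with its parent. First I would identify the parents of branch roots: by the definition of a branch, the parent of a branch root $r$ corresponds either to a $3$-clique or to a clique sharing exactly two vertices with $r$, and in either case this parent must \emph{not} itself lie inside a branch. So the natural strategy is to associate each branch with its parent node and argue that distinct branches receive parents that are either maximal $3$-cliques or the root of the whole decomposition.

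The key structural fact I would extract from the definition of a nice tree-decomposition is the following. Consider any node $u$ corresponding to a $k$-clique with $k\ge 4$. By the third niceness property, if $u$ is not the root then $k-1$ of its associated vertices are shared with its parent; in particular a node of order at least four shares at least three vertices with its parent, \emph{unless} that node is a branch root (which shares exactly two). Consequently, walking upward from any node of order $\ge 4$, we stay within nodes of order $\ge 4$ sharing $\ge 3$ vertices until we either reach the root or reach a branch root, above which sits a node that is either a maximal $3$-clique or the root. This shows every branch root's parent is a node corresponding to a $3$-clique or is the root itself, and by the second niceness property every such $3$-clique is maximal.

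Next I would bound the count from below. Each maximal $3$-clique $C$ corresponds to a unique node $u_C$ of the tree-decomposition (all nodes carry distinct vertex sets by Lemma~\ref{lm-maximal}), and each child of $u_C$ that corresponds to a $4$-clique sharing two vertices with $u_C$ is the root of a branch. I would argue that each maximal $3$-clique yields at least one branch below it: since $G$ is $4$-chordal, every edge of $C$ lies in a $4$-clique, forcing $u_C$ to have a descendant of order $\ge 4$, and the first such descendant encountered is a branch root. Together with accounting for the root of the decomposition --- which, by the fourth niceness property, corresponds to a maximal $3$-clique when one exists and otherwise to a clique of order $\ge 4$ --- I would obtain the two extra branches beyond the $t$ contributed by the remaining maximal $3$-cliques, giving at least $t+2$ in total.

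The main obstacle I anticipate is the bookkeeping that guarantees \emph{distinct} branches are counted, i.e., that no branch is double-counted against two different maximal $3$-cliques and that the two "extra" branches promised by the root are genuinely additional rather than already tallied. This requires care because a single $3$-clique node might be the parent of several branch roots, and conversely one must ensure the root contributes branches not already associated with a $3$-clique. I expect the cleanest resolution is to define an explicit injection from a set of size $t+2$ (the $t$ maximal $3$-cliques plus two tokens attached to the root's subtree structure) into the set of branches, using the unique-upward-path property established above to certify injectivity; verifying that this map is well-defined and injective will be the technical heart of the argument, while the remaining steps are direct consequences of the niceness properties.
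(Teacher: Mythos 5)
Your proposal rests on two structural claims that are false, both stemming from a misreading of the definitions. First, a branch is \emph{not} required to be a maximal subtree of nodes of order at least four, and your central claim --- that the parent of a branch root is always a node corresponding to a maximal $3$-clique or the root of the decomposition --- does not hold. The third niceness property constrains only nodes of order at least \emph{five}; a node corresponding to a $4$-clique may share two (or even fewer) vertices with a parent of any order. Hence branches can nest: take $G$ with maximal cliques $\{a,b,c,d\}$, $\{c,d,e,f\}$, $\{e,f,g,h\}$ and the nice tree-decomposition that is a path rooted at $\{a,b,c,d\}$; then $\{e,f,g,h\}$ is a branch root whose parent is a $4$-clique node lying inside another branch, not a $3$-clique and not the root. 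So the ``walking upward'' argument, and any injection whose well-definedness depends on it, collapses.

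Second, the accounting ``each maximal $3$-clique contributes at least one branch below it, and the root contributes two extra'' cannot be repaired into an injection of the kind you describe. A node $u$ corresponding to a maximal $3$-clique need not root any branch at all: each of the three edges of the $3$-clique extends to a $4$-clique through a distinct neighbor of $u$, but all three of these neighbors may lead to subtrees containing further $3$-cliques (glue a chain of triangles, as in the paper's graphs $H_k$, onto each edge of a central triangle and root the decomposition at the central triangle), in which case no child of $u$ roots a branch. Your fallback --- that the first descendant of order at least four below $u$ is a branch root --- also fails, since that descendant may share fewer than two vertices with its parent or may have $3$-clique descendants. The deficit at such nodes is compensated by $3$-clique nodes that are ``leaves'' of the $3$-clique structure, which root two or three branches each, and capturing this compensation is exactly what your plan lacks. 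The paper does it globally: contract $T$ to a tree $T'$ whose nodes are the $t$ nodes corresponding to $3$-cliques (its root is the root of $T$, which corresponds to a maximal $3$-clique by the fourth niceness property), observe that a node of degree $d$ in $T'$ roots at least $3-d$ branches in $T$, and sum over all nodes to get at least $3t-2(t-1)=t+2$ branches. Without this degree-sum argument (or an equivalent induction), the ``technical heart'' you explicitly defer --- well-definedness and injectivity of your map --- is precisely the step that fails, so the proposal is not a proof.
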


\begin{proof}
Let $T$ be a nice tree-decomposition of $G$.
Suppose that $u$ is a node of $T$ corresponding to a $3$-clique formed by vertices $v_1$, $v_2$ and $v_3$.
Since $G$ is a $4$-chordal graph, every pair of the vertices $v_1$, $v_2$ and $v_3$ is contained in a $4$-clique.
In particular, for every such pair, there exists a node adjacent to $u$ that contains both vertices of the pair.
Let $u_1$, $u_2$ and $u_3$ be these nodes for the three different pairs of the vertices $v_1$, $v_2$ and $v_3$.
Since the $3$-clique corresponding to $u$ is maximal, the nodes $u_1$, $u_2$ and $u_3$ are different.
Observe that if $u_i$, $i\in\{1,2,3\}$, is a child of $u$, then the subtree rooted at $u_i$ is a branch
unless it contains a node corresponding to a $3$-clique.

Let $T'$ be a rooted tree obtained from $T$ as follows:
the nodes of $T'$ are the nodes of $T$ corresponding to $3$-cliques;
the root of $T'$ is the root of $T$; and
a node $u$ is the parent of a node $u'$ if the path from $u$ to $u'$ contains no node corresponding to a $3$-clique.
Hence, every leaf of $T'$ will have at least two children in $T$ such that the subtrees rooted at them form a branch, and
every node with exactly one child in $T'$ has at least one child in $T$ such that the subtree rooted at it is a branch.
Moreover, if the root of $T'$ has $d$ children in $T'$ and $d<3$,
it has at least $3-d$ children in $T$ such that the subtrees rooted at them form a branch.
We conclude that if the degree of a node of $T'$ is $d$,
this node in $T$ has at least $3-d$ children such that the subtrees rooted at them form a branch.
Since $T'$ has $t$ nodes and the sum of their degrees is $2(t-1)$, we derive that $T$ must have at least $3t-2(t-1)=t+2$ branches.
\end{proof}

We now modify the basic algorithm to save at least one \zloty{} while processing each branch of an input nice tree-decomposition.

\begin{lemma}
\label{lm-branch}
There exists a cash-preserving clique transversal algorithm that
saves at least one \zloty{} when processing the nodes of each branch of an input nice tree-decomposition.
\end{lemma}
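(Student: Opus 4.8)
The plan is to modify the basic algorithm so that, when it reaches a branch, it extracts at least one extra \zloty{} of savings. Recall a branch is a rooted subtree whose root $r$ corresponds to a $4$-clique sharing exactly two vertices with its parent, no descendant of which corresponds to a $3$-clique. Since every descendant clique has order at least four, the basic algorithm processes the entire branch using only the Case~1/Case~2 rules (the B- and G-rules), never touching the Case~3/Case~4 rules reserved for $3$-cliques. The first step is therefore to understand the ``state'' the basic algorithm presents at the root $r$: after all proper descendants of $r$ have been processed, the two vertices of $r$ that are \emph{not} shared with the parent may carry some distinguished tuples, while the two shared vertices persist upward. I would catalogue the possible configurations of distinguished tuples on the vertices of $r$ at this moment.

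The key idea is that a branch is a self-contained region where the basic algorithm already performs a bounded amount of work, and the nice tree-decomposition guarantees that at $r$ we have a $4$-clique whose ``interface'' to the rest of the tree is only two vertices. Concretely, I would process the root $r$ of each branch by a dedicated rule that is guaranteed to be good. The structural constraint $|U|=4$ with exactly two private vertices means that when we process those two private vertices, we are in a situation analogous to Case~1/Case~2 but with extra slack: there are four vertices in the clique, two of which leave the graph. I expect to show that this forces at least one of the good rules (G1--G5), or an explicitly engineered new rule, to fire somewhere in the processing of the branch, by a case analysis on how many distinguished tuples the private vertices belong to. In the worst case---where a naive processing would use only bad rules---I would argue that the branch root offers a redundant $4$-clique structure that lets us save a \zloty{} by, for instance, noticing a vertex already forced red or a distinguished pair whose partner is already distinguished (triggering the G4-type saving).

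Carrying this out, I would (i) fix the modified algorithm to coincide with the basic algorithm everywhere except on branch roots; (ii) verify that the modification keeps the algorithm cash-balanced, since the bookkeeping rules for \zloty{}s are unchanged in structure; and (iii) for each branch, exhibit a point in its processing at which a good rule, or the new branch rule, necessarily applies, thereby saving at least one \zloty{}. The invariants listed in the algorithm's description---that a distinguished tuple never contains a red vertex, induces a complete subgraph, and eventually yields a red vertex---must be maintained throughout, so I would check the new branch rule against all three.

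The hard part will be the case analysis at the branch root: I must guarantee a saving regardless of which distinguished tuples the private vertices of $r$ happen to belong to when processing reaches them, and do so without disturbing the two shared vertices in a way that would break the accounting higher up the tree. In particular, I expect difficulty in the subcase where both private vertices of the branch root carry only a single triple each (the configuration that in the basic algorithm would invoke bad Rule~B3 and save nothing); there I anticipate needing to exploit the fourth vertex of the $4$-clique together with the fact that the clique is non-trivial of order exactly four to force a saving, possibly by re-routing one of the triples into a pair via the shared structure. Once that subcase is handled, the remaining subcases should follow by direct appeal to the good rules already established for the basic algorithm.
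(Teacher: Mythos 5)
There is a genuine gap: your plan (i) restricts the modification to the branch root, but no root-local rule can produce the required saving. Consider a run of the basic algorithm inside the branch in which only bad rules fire (this is the only situation that needs fixing). By the time processing reaches the root $r$, every distinguished triple has already been consumed by Rule B2 or B3 somewhere below, so the two private vertices $w_1,w_2$ of $r$ are each either red or matched into distinguished pairs. In those configurations the accounting is exactly tight and leaves zero slack: if $\{w_1,w_2\}$ is a pair, the invariant that every tuple must yield a red vertex forces one of them red, costing $7$ \zloty{}s against exactly $2+2+3=7$ available; if $w_1$ and $w_2$ are paired with the interface vertices $\alpha$ and $\beta$ respectively, then $\alpha$ and $\beta$ must both become red, costing $14$ against exactly $2+2+3+3+2+2=14$ available. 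No re-engineering of the rule applied at $r$ can save a \zloty{} here while preserving the tuple invariants, so the saving must be extracted \emph{before} the root, at the node where the last triple disappears. This is precisely what the paper does: it fixes a leaf-to-root path $u_0,\ldots,u_\ell=r$, takes the largest $k$ such that a triple still lives in the clique of $u_k$, and intervenes at $u_k$ (where B2 or B3 would fire); even then the argument is not local, since the paper's repair in the B2 case relies on $\alpha$ being contained in all cliques of $u_k,\ldots,u_\ell$ to force a later G5 application, and the B3 case requires an iterative argument climbing the path toward $r$.

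A secondary error: the subcase you single out as hardest---each private vertex of $r$ lying in a single triple---is in fact harmless, since a vertex contained in exactly one distinguished triple (and no pair) triggers Rule G3, which is a good rule; Rule B3 requires membership in \emph{two} triples. The genuinely difficult configurations are those where B2 or B3 fires, and these can occur at an arbitrary node of the branch, not just at $r$; any correct proof has to follow the tuples down into the branch, as the paper's does.
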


\begin{proof}
Consider a branch rooted at a node $r$.
Observe that we can freely choose the order in which the nodes of the branch are processed provided 
that all the descendants of each node are processed before the node itself.
In particular, if we find an order such that at least one of the good rules is used,
we save one \zloty{} as desired.

Let $u$ be an arbitrary leaf node of the branch and
let $u_0=u,u_1,\ldots,u_\ell=r$ be the path from $u$ to $r$ in $T$.
Furthermore, let $\alpha$ and $\beta$ be the two vertices of the clique corresponding to $r$ that
are also contained in the clique corresponding to the parent of $r$.
The node $u$ will be the first node of the branch to be processed.
Suppose first that the clique corresponding to $u$ has two vertices that are not contained in the clique corresponding to $u_1$.
This implies that $u$ corresponds to a $4$-clique.
Also note that the two such vertices are associated with the node $u$ only,
in particular, they cannot be contained in a distinguished tuple.
Hence, the first two rules that apply are either Rules B5 and G3 or Rule G5 twice.
In both cases, a good rule is applied and hence at least one \zloty{} is saved.
The remaining nodes of the branch can be processed in an arbitrary order.

If the clique corresponding to $u$ has exactly one vertex not contained in the clique corresponding to $u_1$,
then either Rule B5 or Rule G5 is the first rule to apply.
In the latter case, a good rule is applied, so we need only analyze the former.
We process all nodes of the branch in an arbitrary order, but stipulate that the nodes $u_1,\ldots,u_{\ell}$ are to be processed last.
We can assume that no good rules are applied while the branch is processed---otherwise, one \zloty{} is saved.
In particular, the distinguished triple created by Rule B5 during the removal of the node $u$
cannot cease to exist before processing the node $u_1$ (without applying Rule G4).

Let $k$ be the largest index such that a subset of the vertices of the clique corresponding to $u_k$ form a distinguished triple 
prior to processing the node $u_k$, and
let $V$ be the set of vertices contained in the cliques corresponding to the nodes $u_k,\ldots,u_\ell$.
By the choice of $k$, either Rule B2 or B3 applies to $u_k$; let $v$ be the vertex that is removed at this step.
Observe that the choice of $k$ implies that, after $v$ is removed,
only Rules B1 and B4 can apply while processing $u_k,\ldots,u_\ell$,
i.e., the distinguished pairs induce a matching (without multiple identical pairs) on non-red vertices of $V$
with the possible exception of $\alpha$, $\beta$ or both, which may be unmatched.

We modify the basic algorithm to remove the vertex $v$ more economically depending on which of 
Rules B2 or B3 removed it, saving at least one \zloty{} as required.

{\bf Case 1: The vertex $v$ was removed by Rule B2.}

Let $v_1$ and $v'_1$ be the two vertices forming a distinguished triple with $v$ and
let $v_2$ be the vertex forming a distinguished pair with $v$.
If $v_2=v_1$ or $v_2=v'_1$, we color $v_2$ red and remove $v$, including the tuples containing it.
In this way, two \zloty{}s are saved.
Hence, we can assume that the vertices $v_1$, $v'_1$ and $v_2$ are mutually distinct.
Note that at least one of these three vertices must be distinct from $\alpha$ and $\beta$.

If $v_2$ is contained in a distinguished pair with another vertex, we proceed as follows.
We color $v_2$ red and remove $v$.
We also remove the distinguished pairs containing $v_2$ and the distinguished triple $\{v,v_1,v'_1\}$, 
before creating a new distinguished pair $\{v_1,v'_1\}$.
This procedure removes $12$ \zloty{}s (two from each of $v$ and $v_2$, three from each distinguished pair 
containing $v_2$ and two from the distinguished triple), which we reassign as follows:
seven \zloty{}s are paid for coloring $v_2$ red,
three \zloty{}s are reassigned to the new distinguished pair, and
two \zloty{}s are saved.

If $v_2$ is not contained in a distinguished pair with another vertex, $v_2$ is either $\alpha$ or $\beta$. 
By symmetry, we can assume that $v_2=\alpha$.
Consequently, at least one of the vertices $v_1$ and $v'_1$ is distinct from $\beta$.
We can assume $v_1\not=\beta$ by symmetry.
Let $v''_1$ be the vertex contained in a distinguished pair with $v_1$.
If $v''_1=\beta$, we swap the roles of $v_1$ and $v'_1$.
In particular, we can assume that neither $v_1$ nor $v''_1$ is $\beta$.
We can now proceed as follows:
the vertices $v_1$ and $v_2=\alpha$ are colored red
while the vertex $v$ and all distinguished tuples containing $v_1$ or $v_2$ are removed.
A total of $14$ \zloty{}s are removed (two from each of the vertices $v$, $v_1$ and $v_2$; five from the 
distinguished tuples containing $v$; and three from the distinguished pair containing $v_1$) 
as payment for coloring the vertices $v_1$ and $v_2$ red.
We run the basic algorithm and observe that the vertex $v''_1$ is removed by Rule G5 (the vertex $\alpha$ is contained
in all the cliques corresponding to the nodes $u_k,\ldots,u_{\ell}$), which results in saving two \zloty{}s.

{\bf Case 2: The vertex $v$ was removed by Rule B3.}

If both distinguished triples containing $v$ also contain another vertex, say $v'$,
we color $v'$ red and remove the vertex $v$, including the two triples containing it.
In this way, we have removed eight \zloty{}s, of which seven are paid for coloring $v'$ red and
one is saved. Hence, we can assume that $v$ is the only vertex contained
in the intersection of the two distinguished triples.

We proceed by coloring the vertex $v$ red and removing the two triples containing it.
Unfortunately, we are only able to pay six out of the seven \zloty{}s requisite to color $v$ red.
We argue that we can always remove two \zloty{}s while processing the nodes $u_{k+1},\ldots,u_\ell$ without creating a red vertex;
one of which will pay off the one \zloty{} debt for coloring $v$ red and 
the other will constitute the required saving in the branch.

Let $x_1,\ldots,x_4$ be the four other vertices contained in the two distinguished triples that contained $v$.
Let $k'$ be the largest index such that the clique corresponding to $u_{k'}$ contains all four of the vertices $x_1,\ldots,x_4$.
By symmetry, we can assume that $x_1$ is not contained in the clique corresponding to the parent of $u_{k'}$.
Note that still Rules B1 and B4 apply solely while processing the nodes $u_{k+1},\ldots,u_{k'-1}$.
In particular, no new distinguished tuples are created and none of the vertices $x_1,\ldots,x_4$ become red.
If the clique corresponding to the node $u_{k'}$ contains a red vertex, we simply apply Rule G5 to $x_1$.
If the clique of $u_{k'}$ contains no vertices in addition to $x_1,\ldots,x_4$,
we may still remove $x_1$ and its two \zloty{}s
since the clique is not maximal (it is a subset of the clique corresponding to $u_k$).
Hence, we can assume that the clique of $u_{k'}$ contains a non-red vertex $y$.

We first assume that $y=\alpha$ (note that the case $y=\beta$ is symmetric).
If no vertex from $x_2,\ldots,x_4$ is $\beta$,
we color $y$ red and observe that all of the vertices $x_1,\ldots,x_4$ are eventually removed by Rule G5.
Thus we have removed $10$ \zloty{}s: seven \zloty{}s are paid for coloring $y$ red,
one \zloty{} pays off the single \zloty{} debt for coloring $v$ red, and two \zloty{}s are saved.
If one of the vertices $x_2,\ldots,x_4$, say $x_4$, is $\beta$,
we create a distinguished pair formed from $y=\alpha$ and $x_4=\beta$, and
when processing the vertices $x_1,x_2,x_3$, simply remove them from the graph (the cliques corresponding to the nodes
containing $x_1,x_2,x_3$ also contain both $\alpha$ and $\beta$, so they will eventually contain a red vertex).
In this way, a total of $6$ \zloty{}s are removed:
one pays off the \zloty{} debt for coloring $v$ red,
three are assigned to the new distinguished pair, and
two \zloty{}s are saved.

If the vertex $y$ is neither $\alpha$ nor $\beta$, it must be contained in a distinguished pair with another vertex, say $y'$.
When processing $u_{k'}$,
we color $y$ red, which is paid for with the three \zloty{}s assigned to the distinguished pair containing $y$ and four 
\zloty{}s shared between the vertices $x_1$ and $y$.
If any of the vertices $x_2,x_3,x_4$ is removed before $y$, we must have applied Rule G5.
The two \zloty{}s we save from this are used to pay off the one \zloty{} debt for coloring $v$ red and make the required saving 
in the branch.
Otherwise, $y$ is removed before the vertices $x_2,x_3,x_4$, 
which implies there is a clique containing each of the vertices $x_2,x_3,x_4,y,y'$.
After $y$ is removed,
the case of the vertices $x_2,x_3,x_4,y'$ is completely analogous to the original case of the vertices $x_1,\ldots,x_4$, and
we proceed in the very same way, i.e., we repeat the steps presented in this and the preceding two paragraphs.
Since the process must eventually terminate, we find one \zloty{} to pay off the debt and also save one \zloty{}.
\end{proof}

Propositions~\ref{prop-zlotys} and~\ref{prop-branch} and Lemma~\ref{lm-branch} yield the following.

\begin{theorem}
\label{thm-main-A}
Every $4$-chordal graph $G$ containing a maximal $3$-clique
has a clique transversal with at most $2(|G|-1)/7$ vertices.
\end{theorem}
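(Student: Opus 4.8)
The plan is to combine the accounting identity of Proposition~\ref{prop-zlotys} with the per-branch saving guaranteed by Lemma~\ref{lm-branch} and the lower bound on the number of branches from Proposition~\ref{prop-branch}. Since $G$ contains a maximal $3$-clique, Proposition~\ref{prop-nice} supplies a nice tree-decomposition $T$ of $G$ whose root corresponds to a maximal $3$-clique; this is the input that I would feed to the algorithm.

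First I would run the cash-balanced clique transversal algorithm provided by Lemma~\ref{lm-branch} on $T$, obtaining a clique transversal $X$ of $G$. Write $n=|G|$ and let $t$ be the number of maximal $3$-cliques of $G$. By Proposition~\ref{prop-zlotys}, the size of $X$ satisfies $|X|=(2n+t-s)/7$, where $s$ denotes the total number of \zloty{}s saved during the run. It therefore suffices to prove that $s\ge t+2$, since then $|X|\le (2n+t-(t+2))/7 = 2(n-1)/7$, which is exactly the claimed bound.

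Next I would establish $s\ge t+2$. By Proposition~\ref{prop-branch}, the tree-decomposition $T$ contains at least $t+2$ branches, and (as in the proof of that proposition, where the branches hang off distinct children of distinct $3$-clique nodes) these branches may be taken to be node-disjoint rooted subtrees of $T$. Lemma~\ref{lm-branch} guarantees that the algorithm saves at least one \zloty{} while processing the nodes of each branch. Substituting the resulting inequality $s\ge t+2$ into the identity above completes the argument.

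The point requiring the most care is the additivity of the savings across branches: I must ensure that the single \zloty{} saved per branch by Lemma~\ref{lm-branch} is realized independently for each branch, so that these savings are not double-counted against one another. This follows from the node-disjointness of the branches together with the freedom, exploited in the proof of Lemma~\ref{lm-branch}, to choose any processing order in which all descendants of a node are handled before the node itself; one then processes the branches one at a time and collects a fresh saved \zloty{} from each, so that the distinct savings genuinely accumulate to at least $t+2$.
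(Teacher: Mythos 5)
Your proposal is correct and follows essentially the same route as the paper, whose proof of this theorem is precisely the combination of Proposition~\ref{prop-zlotys}, Proposition~\ref{prop-branch}, and Lemma~\ref{lm-branch} that you spell out; your derivation $s\ge t+2$ and the substitution into $|X|=(2n+t-s)/7$ is exactly the intended argument. Your added care about node-disjointness of the branches (which indeed holds for the branches constructed in the proof of Proposition~\ref{prop-branch}, as they are rooted at distinct children of $3$-clique nodes and contain no $3$-clique nodes themselves) is a legitimate filling-in of a detail the paper leaves implicit.
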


\subsection{Chordal graphs with no maximal $3$-cliques}

As identified earlier,
the basic algorithm together with Proposition~\ref{prop-zlotys} gives, in this case, a clique transversal of $G$ 
of size at most $2|G|/7$, which is greater than our proposed bound of $2(|G|-1)/7$.
To attain this improvement, we will modify the basic algorithm depending on the structure of $G$.
In general, the algorithm will follow the steps of the basic algorithm except for a small number of the initial steps.

\begin{theorem}
\label{thm-main-B}
Every $4$-chordal graph $G$ that contains no maximal $3$-clique
has a clique transversal with at most $2(|G|-1)/7$ vertices unless $|G|=4$.
\end{theorem}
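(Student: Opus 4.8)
The plan is to invoke Proposition~\ref{prop-zlotys} with $t=0$: any cash-balanced clique transversal algorithm then produces a transversal $X$ with $|X|=(2n-s)/7$, where $s$ is the number of \zloty{}s it saves, so it suffices to turn the basic algorithm (which is already cash-balanced) into one that saves $s\ge 2$ whenever $n\ge 5$. I would root the nice tree-decomposition, via Proposition~\ref{prop-nice}, at a maximal clique $C$ of largest order (necessarily of order at least four, as $G$ has no maximal $3$-clique), run the basic algorithm from the leaves upward, and treat the root separately. When the root is finally processed every remaining vertex lies in $C$, and the only surviving distinguished tuples and red vertices are contained in $C$; I would cover $C$ by colouring red a smallest set $W\subseteq C$ meeting all surviving tuples, or a single arbitrary vertex of $C$ if there are none and none is already red. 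This already pinpoints the exception: for $G=K_4$ there are only eight \zloty{}s in total and seven are spent on the unique red vertex, so $s\le 1<2$, in agreement with the fact that $2(n-1)/7=6/7<1$ then genuinely fails.

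The heart of the argument is an amortised count of the saved \zloty{}s. Over the whole run \zloty{}s are saved precisely when a good rule fires, and further \zloty{}s are freed when $C$ is covered: the \emph{root surplus} equals the \zloty{}s present on $C$ (two per non-red vertex, three per surviving pair, two per surviving triple) minus $7|W|$. I would show that the good-rule savings together with the root surplus total at least two. Two guiding situations fix the bookkeeping. First, a single maximal clique $C=K_5$, where the root surplus alone is $10-7$. Second, two leaves each sharing exactly two vertices with its parent: each such leaf is a $4$-clique on which the reduction by Rule~B5 followed by Rule~G3 fires, contributing one saved \zloty, while the root itself may contribute nothing. In every intermediate configuration the saved \zloty{}s must likewise be tallied both at the leaves, where triples collapse to pairs, and at the root, where $C$ is paid for.

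I would then split on the coarse structure of $G$. If $G$ has a maximal clique of order at least five, or has two leaves each sharing exactly two vertices with its parent, then two savings are exhibited directly. In the remaining case I would mirror the path-to-root analysis of Lemma~\ref{lm-branch}: fixing a leaf and processing the path from it to the root last (permissible since descendants may be processed before their ancestors), a short analysis of the first node on this path forces either a good rule before the root, or a distinguished tuple contained in $C$ surviving to the root, or a red vertex of $C$, and by the previous paragraph each of these supplies the outstanding second \zloty. The hard part will be exactly this tuple bookkeeping along the ascent: verifying that the good-rule savings and the root surplus are genuinely additive rather than double-counted, bounding $|W|$ against the \zloty{}s carried by the surviving tuples, and confirming that the sole single-saving pattern collapses to $G=K_4$. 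This is the analogue, for the root clique $C$ playing the role of a pendant that must itself be covered, of the branch analysis already carried out in Lemma~\ref{lm-branch}.
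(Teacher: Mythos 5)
Your framework is the right one---Proposition~\ref{prop-zlotys} with $t=0$ reduces the theorem to turning the basic algorithm into one that saves $s\ge 2$ \zloty{}s, $K_4$ is correctly isolated as the exception, and your observation that a leaf sharing exactly two vertices with its parent forces Rule~B5 followed by Rule~G3 (or Rule~G5 twice), saving one \zloty{}, is exactly the opening move of Lemma~\ref{lm-branch}. But the load-bearing claim of your sketch is false: rooting at a maximal clique of order at least five does \emph{not} exhibit two savings ``directly,'' because the root surplus is not bounded below by $2$. Concretely, take the nice tree-decomposition with root $C=\{v_1,\dots,v_5\}$, children $\{v_1,v_2,a,b\}$ and $\{v_3,v_4,v_5,x\}$, where $\{v_1,v_2,a,b\}$ has the child $\{v,v_1,v_2,a,b\}$, which in turn has children $\{v,v_1,v_2,c\}$ and $\{v,a,b,d\}$ (eleven vertices, all maximal cliques of size at least four, $C$ a maximal clique of largest order). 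Running the basic algorithm upward: the two deepest leaves fire Rule~B5, creating triples $\{v,v_1,v_2\}$ and $\{v,a,b\}$; the node $\{v,v_1,v_2,a,b\}$ fires Rule~B3, converting them into pairs $\{v_1,v_2\}$ and $\{a,b\}$; the node $\{v_1,v_2,a,b\}$ fires Rule~B4, coloring one of $a,b$ red and destroying the pair $\{a,b\}$; and the leaf $\{v_3,v_4,v_5,x\}$ fires Rule~B5, creating the triple $\{v_3,v_4,v_5\}$. Every rule applied is bad, and at the root the surviving tuples are the pair $\{v_1,v_2\}$ and the \emph{disjoint} triple $\{v_3,v_4,v_5\}$ with no red vertex in $C$, so $|W|=2$ and your surplus is $10+3+2-14=1$. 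Hence $s=1$ and your algorithm outputs a transversal of size $3$, while the theorem demands $\lfloor 2\cdot 10/7\rfloor=2$ (which $\{v,v_3\}$ achieves). The surplus can even be $0$: a $6$-clique root whose six vertices are perfectly covered by three surviving disjoint pairs gives $12+9-21=0$. Re-rooting at the other $5$-clique $\{v,v_1,v_2,a,b\}$ happens to rescue this example, but nothing in your proposal selects that root or proves a good root always exists.

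This means the part you defer as ``tuple bookkeeping along the ascent'' is not bookkeeping---it is the entire content of the theorem: one must control \emph{which} tuples can survive at the root, and that is precisely what makes the result hard. Note also that Lemma~\ref{lm-branch} does not transfer to the root the way you hope: its argument leans essentially on the branch root sharing exactly two vertices $\alpha,\beta$ with a parent clique that remains in the graph (these vertices may stay unmatched or be colored red to rescue later cliques), whereas the global root has no parent. The paper takes a genuinely different route: a counterexample $G$ minimal in vertices and then edges, together with a tree-decomposition into maximal cliques chosen to maximize the number of leaves. Minimality is used to delete vertices and edges, forcing strong structure---every leaf is a $4$-clique sharing three vertices with its parent, no node has a single leaf child (this step builds an auxiliary graph $H$ with two new vertices and applies Lemma~\ref{lm-branch} to it), and two leaf-siblings always share exactly two vertices---after which a three-case analysis of the global shape of $T$ engineers the two-\zloty{} saving by ad hoc modifications of the algorithm near pairs of leaves and near the root. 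Your sketch, as it stands, neither recovers these reductions nor offers a substitute for them.
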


\begin{proof}
Let $G$ be a counterexample to the statement of the theorem 
containing the minimum number of vertices and, subject to this, the minimum number of edges.
Clearly, $G$ is connected, in particular, $G$ has no isolated vertices.
Fix a rooted tree-decomposition $T$ of $G$ with the maximum number of leaves such that
every node corresponds to a maximal clique of $G$ and
no two nodes correspond to the same clique (such a tree-decomposition exists by Lemma~\ref{lm-maximal}).
In particular, no node of $T$ corresponds to a $3$-clique.
If $G$ has at most two maximal cliques, then it has a clique transversal of size one.
So, we can assume that $G$ has at least three maximal cliques, which implies 
that the tree-decomposition $T$ has at least three nodes.
In particular, the root of $T$ has at least two children.

We now show that we may assume each leaf node of $T$ corresponds to a $4$-clique.
Suppose that $T$ has a leaf node $u'$ that corresponds to a $k$-clique with $k\ge 5$.
Let $V'$ be the set of vertices of $G$ that are associated with $u'$, and 
let $V$ be the set of vertices associated with the parent of $u'$ in $T$.
If $|V'\setminus V|\ge 2$, then let $v$ be any vertex in $V'\setminus V$ and
let $G'$ be the graph obtained from $G$ by removing the vertex $v$.
If $|V'\setminus V|=1$, then let $v$ be the unique vertex in $V'\setminus V$,
let $v'$ be any vertex of $V'\cap V$, and
let $G'$ be the graph obtained from $G$ by removing the edge $vv'$.
Observe that in both cases, $G'$ is a $4$-chordal graph and
any clique transversal of $G'$ is also a clique-transversal of $G$.
However, this contradicts the choice of $G$ as a minimal counterexample.
We conclude that every leaf node of $T$ corresponds to a $4$-clique.

Suppose that $T$ has a node $u$ adjacent to a leaf $u'$ such that
at least two vertices associated with $u'$ are not associated with $u$.
Let $V$ and $V'$ be the sets of vertices of $G$ associated with nodes $u$ and $u'$ respectively, and
let $k=|V'\setminus V|$. Note that $k\in\{2,3\}$ since $G$ is connected.
Consider a nice tree-decomposition $T'$ such that its root corresponds to the clique induced by $V'$;
such a tree-decomposition $T'$ exists by Proposition~\ref{prop-nice}.
Note that the root of $T'$ has a single child, which corresponds to a $4$-clique, and
exactly the $k$ vertices of $V'\setminus V$ are not associated with its only child.

If $k=2$, the subtree rooted at the only child of the root of $T'$ is a branch and
we process its nodes following Lemma~\ref{lm-branch}, which results in saving one \zloty{}.
If one of the nodes $V'$ is red, then we can save at least $4$ \zloty{}s and
conclude that $G$ has a clique transversal of size at most $(2|G|-5)/7$.
If no node of $V'$ is red, we color one of the vertices of $V\cap V'$ red, making a saving of 
one \zloty{} from the eight \zloty{}s assigned to the vertices of $V'$ (additional \zloty{}s
are saved if the two vertices of $V\cap V'$ form a distinguished pair), and
conclude that $G$ has a clique transversal of size at most $(2|G|-2)/7$.

If $k=3$, then exactly one vertex, say $v$, of $V'$ is also contained in $V$.
Let $v'$ be another vertex associated with the child of the root of $T'$.
Modify $T'$ to $T''$ by associating the vertex $v$ with the root of $T'$;
the subtree of $T''$ rooted at the single child of the root is now a branch,
which shares the vertices $v$ and $v'$ with the rest of the graph.
Using Lemma~\ref{lm-branch}, we process this branch and save one \zloty{}.
The only vertices remaining after processing the branch are those contained in $V'\cup\{v'\}$.
Observe that none of the vertices of $V'\setminus\{v\}$ are red or contained in a distinguished tuple;
in particular, each of them still has two \zloty{}s.
If $v$ is red, then we remove all vertices contained in $V'\cup\{v'\}$ from the graph;
in this way, we save at least six \zloty{}s, which are assigned to the vertices of $V'\setminus\{v\}$.
If $v$ is not red, we color $v$ red and remove all vertices contained in $V'\cup\{v'\}$,
including the distinguished pairs $\{v,v'\}$ if they exist;
we save one \zloty{} out of eight \zloty{}s assigned among the vertices of $V'$.
In total, we save at least two \zloty{}s in both cases and 
conclude that $G$ has a clique transversal of size at most $2(|G|-1)/7$.

From now on, we can assume that all but one vertex associated with each leaf node of $T$
are also associated with its parent in $T$.

We next show that the tree $T$ contains no node $u'$ with a single child and this child is a leaf.
Suppose that such a node $u'$ exists.
Let $u$ its child, let $u''$ be its parent, and
let $V$, $V'$ and $V''$ be the vertex sets associated with the nodes $u$, $u'$ and $u''$, respectively.
If $V'\cap V''=V\cap V'$, then we can modify the tree $T$ by replacing the edge $uu'$ with $uu''$, 
which increases the number of leaves and so contradicts the choice of $T$.
So, we assume that $V'\cap V''\not=V\cap V'$.

Suppose that $|V'|>4$.
If $V'\cap V''\subset V\cap V'$, let $v$ be a vertex contained in $(V\cap V')\setminus (V'\cap V'')$.
Otherwise, we have that $V'\cap V''\not\subseteq V\cap V'$ and thus $V'\cap V''\not\subseteq V$, and
we let $v$ be a vertex contained in $(V'\cap V'')\setminus V$.
Remove $v$ from $V'$ and let $G'$ be the $4$-chordal graph corresponding to this modified tree-decomposition.
Any clique transversal of $G'$ is also a clique transversal of $G$, contradictory to the choice of $G$ as a minimum counterexample.

Hence, we can assume that $|V'|=4$. Let $V=\{v_1,v_2,v_3,v_4\}$ and $V'=\{v_2,v_3,v_4,v_5\}$.
By symmetry, we can assume that $V'\cap V''\subseteq\{v_3,v_4,v_5\}$ (recall that $V'\cap V''\not=V\cap V'$ and $V'$ is a maximal clique of $G$).
We now construct another $4$-chordal graph $H$ as follows.
Let $X$ be a two-element subset of $V''$ that contains all the vertices of $V''\cap\{v_3,v_4\}$.
We construct a new tree decomposition $S$ from $T$ by removing the nodes $u$ and $u'$ and
introducing a new node $s$ associated with the set $X\cup\{w,w'\}$ and adjacent to the node $u''$,
where $w$ and $w'$ are two new vertices.
Let $H$ be the chordal graph with the tree-decomposition $S$. Observe that $H$ is $4$-chordal and
all maximal cliques of $G$, except for those induced by $V$ and $V'$, are also maximal cliques of $H$.

Let $S'$ be a nice tree-decomposition of $H$ with its root node corresponding to the clique of $H$ induced by $X\cup\{w,w'\}$.
Observe that the subtree of $S'$ rooted at the child of the root of $S'$ containing a node associated with $V''$ is a branch,
which shares the vertices of $X$ with the rest of the graph.
Hence, we can process all nodes of $S'$ except for the root, saving one \zloty{} by Lemma~\ref{lm-branch}.
Since neither of the new vertices $w$ and $w'$ is associated with a node of this branch,
neither of the vertices $w$ and $w'$, nor their \zloty{}s, are removed during this process.
In addition, the set of red vertices intersects all maximal cliques of $G$, except possibly those induced by $V$ and $V'$. 
Hence, one can think of the current state as if we had processed all nodes of $T$ except for $u$ and $u'$,
while removing all vertices of $G$ except for $v_1,\ldots,v_4$, and saving at least one \zloty{}.
If at least one of the vertices $v_3$ and $v_4$ is red, the set of red vertices is a clique transversal of $G$ and 
we save four \zloty{}s assigned to the vertices $v_1$ and $v_2$.
If none of the vertices $v_3$ or $v_4$ are red, we color one of them red and remove eight \zloty{}s assigned
to the vertices $v_1,\ldots,v_4$; in this way, we have saved an additional one \zloty{}.
In total, we have saved at least two \zloty{}s and so the constructed clique transversal of $G$
contains at most $2(|G|-1)/7$ vertices.

Hence, we can assume onwards that every node adjacent to a leaf has at least two children.
Consider a node $u$ of $T$ that has at least two children that are leaves; note that such a node $u$ must exist.
Let $u'$ and $u''$ be two such children, and
let $V$, $V'$ and $V''$ be the sets of vertices of $G$ associated with the nodes $u$, $u'$ and $u''$ respectively.
Note that $|V\cap V'|=|V\cap V''|=3$, $V'\cap V''\subseteq V\cap V'$ and $V'\cap V''\subseteq V\cap V''$.
Let $k=|V'\cap V''|$ and observe that $k\in\{0,1,2,3\}$.
Further let $v'$ be the vertex of $V'\setminus V$, $v''$ the vertex of $V''\setminus V$,
$v_1,\ldots,v_3$ the vertices of $V\cap V'$, and $v_{4-k},\ldots,v_{6-k}$ the vertices of $V\cap V''$.

We now show it must hold that $k=2$.
If $k=3$, obtain a nice tree-decomposition $T'$ from $T$ by subdividing edges of $T$ as necessary.
Note that $u$ has remained the parent of both $u'$ and $u''$.
We start processing this nice tree-decomposition
by removing the two vertices contained in $V'\setminus V$ and $V''\setminus V$, removing the nodes $u'$ and $u''$, 
and introducing a distinguished triple formed by the vertices of $V'\cap V''$.
This results in saving two \zloty{}s. We process the rest of the graph using the basic algorithm.

If $k\in\{0,1\}$, we fix a nice tree-decomposition $T'$ of $G$ such that
its root is associated with $(V\cap V')\cup(V\cap V'')$ with 
two children associated with $V'$ and $V''$.
Such a nice tree-decomposition $T'$ indeed exists: 
if $V\not=\{v_1,\ldots,v_{6-k}\}$, 
introduce a new node $w$ associated with the set $\{v_1,\ldots,v_{6-k}\}$,
make $w$ a child of $u$, and make both $u'$ and $u''$ children of $w$;
if $V=\{v_1,\ldots,v_{6-k}\}$, set $w$ to be $u$.
Next, reroot the tree-decomposition at $w$ and subdivide edges as necessary to obtain a nice tree-decomposition.
Note that neither the edge $u'w$ nor the edge $u''w$ needs subdividing.

We now process all the nodes of $T'$ except for $w$, $u'$ and $u''$ using the basic algorithm.
Note that neither of the vertices $v'$ and $v''$ can be contained in a distinguished tuple.
If any vertex among $v_1,\ldots,v_{6-k}$ is contained in either at least three distinguished triples, or
a distinguished pair and another distinguished tuple, color that vertex red and remove the \zloty{}s
assigned to it, including the distinguished tuples containing it.
Continue while such a vertex exists. Note that we remove at least seven \zloty{}s each time.
If two distinguished triples share at least two vertices,
replace them with a single distinguished pair containing two of the shared vertices, and
repeat until no such two distinguished triples exist.

We next analyze two cases depending on the value of $k$.
\begin{itemize}
\item {\bf Case $k=1$:}
  If the vertices $v_1,\ldots,v_5$ are contained in at least two distinguished tuples, then these tuples are
  either two disjoint distinguished pairs, a
  disjoint distinguished pair and triple, or
  two distinguished triples sharing a single vertex.
  In each case, it is possible to color two vertices from $v_1\ldots,v_5$ in such a way that
  at least one vertex from $v_1,\ldots,v_3$, at least one vertex form $v_3,\ldots,v_5$, and
  at least one vertex from each distinguished tuple is red.
  Since we have removed at least 16 \zloty{}s ($12$ \zloty{}s assigned among the vertices $v'$, $v''$ and at least
  four of the vertices $v_1,\ldots,v_5$ contained in the distinguished tuples, and
  at least four \zloty{}s assigned among the two distinguished tuples), we have saved at least two \zloty{}s.

  So, we can assume that there is at most one distinguished tuple.
  If this tuple contains the vertex $v_3$, we color $v_3$ red and remove all remaining vertices and \zloty{}s.
  In this way, we remove at least $10$ \zloty{}s (eight \zloty{}s assigned among the vertices $v'$, $v''$ and
  at least two of the vertices $v_1,\ldots,v_5$ contained in the distinguished tuple; and
  at least two \zloty{}s assigned to the distinguished tuple), so we have saved at least three \zloty{}s.

  If the single distinguished tuple does not contain the vertex $v_3$,
  we color one of its vertices red in such a way that if possible both sets $\{v_1,v_2,v_3\}$ and $\{v_3,v_4,v_5\}$ contain a red vertex.
  By symmetry, we can assume that we have colored $v_4$.
  If none of the vertices $v_1,\ldots,v_3$ are red, we color $v_3$ red (note that $v_5$ cannot be red in this case).
  We now remove all remaining vertices and \zloty{}s from the graph.
  If we colored only a single vertex red, we have removed at least $10$ \zloty{}s (eight \zloty{}s assigned among the vertices $v'$, $v''$ and
  at least two of the vertices $v_1,\ldots,v_5$ contained in the distinguished tuple; and
  at least two \zloty{}s assigned to the distinguished tuple), so we have saved at least three \zloty{}s.
  If we colored two vertices red, we have removed at least $16$ \zloty{}s ($14$ \zloty{}s assigned among the vertices $v'$, $v''$ and
  $v_1,\ldots,v_5$; and
  at least two \zloty{}s assigned to the distinguished tuple), so we have saved at least two \zloty{}s.

  Finally, we analyze the case where there is no distinguished tuple.
  If none of the vertices $v_1,\ldots,v_3$ are red or none of the vertices $v_3,\ldots,v_5$ are red,
  we color the vertex $v_3$ red and remove all remaining vertices and \zloty{}s from the graph.
  In this way, we remove at least $10$ \zloty{}s (those assigned among the vertices $v'$, $v''$ and
  at least three of the vertices $v_1,\ldots,v_5$), so we have saved at least three \zloty{}s.
  If at least one of the vertices $v_1,\ldots,v_3$ is red and if at least one of the vertices $v_3,\ldots,v_5$ is red,
  we color no vertex red and save at least the four \zloty{}s assigned to $v'$ and $v''$.
\item {\bf Case $k=0$:}
  If the vertices $v_1,\ldots,v_6$ are contained in at least three distinguished tuples,
  they must form three disjoint distinguished pairs. We color one vertex from each pair red in such a way that
  at least one vertex from $v_1,\ldots,v_3$ and at least one vertex from $v_4,\ldots,v_6$ are red.
  We then remove all remaining vertices and \zloty{}s.
  In this way, we have removed $25$ \zloty{}s ($16$ \zloty{}s assigned among the vertices $v'$, $v''$ and $v_1,\ldots,v_6$; and
  nine \zloty{}s assigned among the three distinguished pairs), so we have saved four \zloty{}s.

  If the vertices $v_1,\ldots,v_6$ are contained in two distinguished tuples,
  then the distinguished tuples together cover at least four vertices and
  it is possible to color two of these vertices red in such a way that
  at least one vertex from $v_1,\ldots,v_3$, at least one vertex from $v_4,\ldots,v_6$, and
  at least one vertex from each tuple are red.
  We then remove all remaining vertices and \zloty{}s.
  In this way, we have removed at least $16$ \zloty{}s (at least $12$ \zloty{}s assigned among the vertices $v'$, $v''$, and
  at least four from the vertices $v_1,\ldots,v_6$; and at least four \zloty{}s assigned among the two distinguished tuples), so
  we have saved at least two \zloty{}s.

  If the vertices $v_1,\ldots,v_6$ are contained in a single distinguished tuple,
  we color a vertex from this distinguished tuple red in such a way that if possible
  at least one vertex from $v_1,\ldots,v_3$ and at least one vertex from $v_4,\ldots,v_6$ are red.
  By symmetry we can assume that we have colored the vertex $v_4$ red.
  If none of the vertices $v_1,\ldots,v_3$ are red,
  we color red any one of them.
  We then remove all remaining vertices and \zloty{}s.
  If this procedure colors only a single vertex red, we have removed at least $10$ \zloty{}s (eight \zloty{}s assigned among the vertices $v'$, $v''$ and
  at least two of the vertices $v_1,\ldots,v_6$ contained in the distinguished tuple; and
  at least two \zloty{}s assigned to the distinguished tuple), so we have saved at least three \zloty{}s.
  If two vertices are colored red, we have removed at least $16$ \zloty{}s ($14$ \zloty{}s assigned among the vertices $v'$, $v''$ and
  $v_1,\ldots,v_6$; and
  at least two \zloty{}s assigned to the distinguished tuple), so we have saved at least two \zloty{}s.
  
  Finally, suppose that there is no distinguished tuple.
  If no vertex from $v_1,\ldots,v_3$ is red, color $v_1$ red.
  Similarly, if no vertex from $v_4,\ldots,v_6$ is red, color $v_4$ red.
  We then remove all remaining vertices and \zloty{}s.
  We have removed four \zloty{}s assigned to the vertices $v'$ and $v''$ and
  an additional six \zloty{}s for each colored vertex.
  In total, we have saved at least two \zloty{}s.
\end{itemize}
Since we save at least two \zloty{}s in both cases,
the size of the constructed clique transversal is at most $2(|G|-1)/7$ as desired.
We conclude that $k=2$.
Since the choice of $u$ was arbitrary,
we conclude that any two leaf nodes that are children of the same node of $T$
are associated with subsets of vertices of $G$ that share exactly two vertices.

We finish the proof by analyzing three cases based on the structure of $T$;
note that the node $u$ is still fixed.
\begin{itemize}
\item {\bf Case 1: The tree $T$ contains at least two different nodes such that each has at least two children that are leaves.}

Let $\hat u$ be another node of $T$ with two children $\hat u'$ and $\hat u''$ that are leaves,
let $\hat v_1,\ldots,\hat v_4$ be the four vertices associated with $\hat u'$, and
let $\hat v_3,\ldots,\hat v_6$ be the four vertices associated with $\hat u''$.
Next obtain a nice tree-decomposition from $T$ by subdividing edges of $T$ as necessary, and
observe that both $u'$ and $u''$ have remained as children of $u$ and
both $\hat u'$ and $\hat u''$ have remained as children of $\hat u$.
We start processing the graph $G$ by removing the vertices $v_1$, $v_6$, $\hat v_1$ and $\hat v_6$,
removing the nodes $u'$, $u''$, $\hat u'$ and $\hat u''$, and
introducing two distinguished pairs $\{v_3,v_4\}$ and $\{\hat v_3,\hat v_4\}$.
In this way, we save two \zloty{}s.
The rest of the graph is then processed following the basic algorithm.

\item {\bf Case 2: The first case does not apply and the root of $T$ has a non-leaf child.}
Since Case 1 does not apply, every inner node of $T$ has at most one child that is not a leaf.
Further, since we have chosen $T$ to be a tree with the maximum number of leaves among all suitable choices of $T$,
the root $r$ of $T$ has at least two children.
It follows that $r$ has two children, one is a leaf and the other is not.
In particular, $u\not=r$.
Let $r'$ be the child of $r$ that is a leaf,
$w_1,\ldots,w_4$ the vertices associated with $r'$, and
$w_5$ one of the vertices associated with $r$ but not with $r'$.
By symmetry, we can assume that the vertices $w_2$, $w_3$ and $w_4$ are associated with $r$.

If the root $r$ is associated with at least five vertices of $G$,
we introduce a new node $u_0$ associated with the vertices $w_2,\ldots,w_5$,
make both $r$ and $r'$ to be children of $u_0$, and root the tree at the node $u_0$;
if the root $r$ is associated with exactly the vertices $w_2,\ldots,w_5$, we set $u_0$ to be $r$.
Next obtain a nice tree-decomposition from $T$ by subdividing edges of $T$ as necessary.
Note that $u'$ and $u''$ have remained as children of $u$, and $r'$ has remained as a child of $u_0$.
We start processing the graph $G$ by removing the vertices $v_1$ and $v_6$,
removing the nodes $u'$ and $u''$, and
introducing a distinguished pair $\{v_3,v_4\}$.
In this way, we have saved one \zloty{}.
We then process the remaining nodes except for $u_0$ and $r'$.
If one of the vertices $w_2$, $w_3$ and $w_4$ is red,
the vertex $w_1$ is removed using Rule G5 and we save an additional two \zloty{}s.
Otherwise, $w_1$ is removed using Rule B5 and the vertices $w_2$, $w_3$ and $w_4$ become a distinguished triple.
Observe that one of the good rules applies when processing the root $u_0$ and
we save an additional one \zloty{}. In both cases, we have saved at least two \zloty{}s in total.

\item {\bf Case 3: All children of the root of $T$ are leaves.}
Observe that $u$ is the root of $T$ and $T$ is a nice tree-decomposition.
If the root has only two children, then the vertex $v_3$ is contained in all cliques, and
the statement of the theorem follows.
If the root has exactly three children, then its third child
is associated with exactly two of the vertices $v_2,v_3,v_4$;
in particular, the third child is associated with $v_3$ or $v_4$ (or both).
By symmetry, we can assume that it is associated with $v_3$.
Consequently, the vertex $v_3$ is contained in all cliques, and
the statement of the theorem again follows.

We now assume that the root has at least four children, i.e.,
it has two additional children $\hat u'$ and $\hat u''$.
Let $\hat v_1,\hat v_2,\hat v_3,\hat v_4$ be the vertices associated with $\hat u'$ and
let $\hat v_3,\hat v_4,\hat v_5,\hat v_6$ be the vertices associated with $\hat u''$ listed
in such a way that the vertices $\hat v_2,\hat v_3,\hat v_4,\hat v_5$ are associated with the root (some of
the vertices $v_2,v_3,v_4,v_5$ might be among the vertices $\hat v_2,\hat v_3,\hat v_4,\hat v_5$).
We now process the graph $G$.
We start with removing the vertices $v_1$, $v_6$, $\hat v_1$ and $\hat v_6$,
removing the nodes $u'$, $u''$, $\hat u'$ and $\hat u''$, and
introducing distinguished pairs $\{v_3,v_4\}$ and $\{\hat v_3,\hat v_4\}$.
In this way, we save two \zloty{}s. We then process the rest of the graph using the basic algorithm.
\end{itemize}
Since we save at least two \zloty{}s in each of the cases,
the size of the constructed clique transversal is at most $2(|G|-1)/7$ as desired.
\end{proof}

\section{Lower bound}

We conclude by showing that the bound given in Theorem~\ref{thm-main} is tight.
To this end, we extend the construction presented in~\cite{bib-andreae96+} for $n\mod 7=1$ to all values of $n$.

\begin{proposition}
\label{prop-lower}
For every $n\ge 5$,
there exists an $n$-vertex $4$-chordal graph with no clique transversal with fewer than $\lfloor 2(n-1)/7\rfloor$ vertices.
\end{proposition}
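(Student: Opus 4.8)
The plan is to construct, for every $n \ge 5$, an explicit $4$-chordal graph achieving the lower bound $\lfloor 2(n-1)/7 \rfloor$. The natural starting point is the Andreae--Flotow construction, which handles the residue class $n \equiv 1 \pmod 7$. First I would recall the shape of that building block: a gadget on seven vertices that is $4$-chordal and forces two vertices into any clique transversal, assembled so that the gadgets share vertices in a tree-like (chordal-preserving) pattern. Concatenating $m$ such gadgets along a shared spine gives a graph on roughly $7m+1$ vertices whose every clique transversal needs at least $2m = 2(n-1)/7$ vertices. I would verify the two invariants this construction must satisfy: (i) chordality and the $4$-chordal property (every edge lies in a $4$-clique), which is preserved under the gluing because the shared parts are cliques, and (ii) a matching lower bound on the clique transversal, obtained by exhibiting a set of pairwise "conflicting" maximal cliques — a collection of maximal $4$-cliques no single vertex of which can cover two at once, forcing the count.

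The core of the argument is then the extension to all residues $n \bmod 7 \in \{0,1,\dots,6\}$, matching $\lfloor 2(n-1)/7\rfloor$ exactly rather than merely up to a constant. The strategy is to take the base construction for the largest $n_0 \le n$ with $n_0 \equiv 1 \pmod 7$ and attach $n - n_0$ extra vertices in a way that increases the vertex count without forcing any additional transversal vertices, so that the floor $\lfloor 2(n-1)/7 \rfloor$ is tracked precisely as $n$ increases through the residues. Concretely, I would attach the extra $1 \le r \le 6$ vertices by placing them inside an existing maximal clique (or as a small pendant clique sharing enough vertices), chosen so that the already-required transversal vertices continue to cover all non-trivial maximal cliques. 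The bookkeeping to check is that adding $r$ vertices with $1 \le r \le 6$ raises $\lfloor 2(n-1)/7 \rfloor$ by at most what the unchanged transversal can still achieve; since $\lfloor 2(n-1)/7\rfloor$ is constant on each block of residues and jumps by exactly $2$ when crossing $n \equiv 1 \pmod 7$, the padding vertices must be arranged never to demand a fresh transversal vertex within a block.

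The main obstacle I expect is the \emph{tightness of the lower bound on the transversal}, i.e.~proving that no clique transversal can do better than $\lfloor 2(n-1)/7\rfloor$, rather than the (comparatively routine) verification of $4$-chordality. The clean way to argue this is via a fractional or combinatorial packing: exhibit a family of maximal non-trivial cliques that are \emph{independent} in the sense that any vertex lies in at most some bounded number of them, and then a simple averaging or LP-duality argument forces the transversal size from below. I would set up the packing so that each gadget contributes cliques covered only by its two "forced" vertices, and the padding vertices lie in cliques already covered by the forced vertices of an adjacent gadget; this both yields the lower bound and confirms the padding adds no cost. The delicate point is ensuring the padding does not accidentally create a new maximal clique requiring its own transversal vertex, which is exactly what would break tightness for the intermediate residues, so each attachment must be checked to be subsumed by an existing maximal clique or coverable by an existing forced vertex.
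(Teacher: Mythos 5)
Your overall plan---start from the Andreae--Flotow graphs for $n \equiv 1 \pmod 7$ and pad with up to six extra vertices for the other residues, certifying the lower bound by a packing of pairwise vertex-disjoint maximal cliques---is exactly the paper's approach. However, your padding step contains a genuine error. You claim that $\lfloor 2(n-1)/7\rfloor$ ``is constant on each block of residues and jumps by exactly $2$ when crossing $n\equiv 1 \pmod 7$.'' This is false: writing $n = 7k+8+z$ with $z = (n-1)\bmod 7$, the bound equals $2k+2$ for $z\in\{0,1,2,3\}$ but $2k+3$ for $z\in\{4,5,6\}$, i.e.\ it increases by $1$ in the \emph{middle} of each block and by $1$ again at the block boundary. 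Consequently your guiding principle, that ``the padding vertices must be arranged never to demand a fresh transversal vertex within a block,'' is precisely the wrong thing to do when $z\ge 4$: a padded graph that still admits a transversal of size $2k+2$ cannot witness the required lower bound of $2k+3$. Concretely, for $n=12$ your recipe would take $H_0$ (eight vertices, clique transversal number $2$) and add four vertices coverable by the existing transversal, producing a graph with a transversal of size $2 < 3 = \lfloor 2\cdot 11/7\rfloor$, which does not prove the proposition.

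The paper resolves this with two different paddings. For $z\le 3$ it joins each new vertex $e_i$ to all four vertices of the clique $\{d,d',d'',d'''\}$, so the packing of $2k+2$ disjoint maximal cliques is undisturbed and matches the (unchanged) target $2k+2$. For $z\ge 4$ it instead makes the new vertices $e_1,\ldots,e_z$ \emph{mutually adjacent}, so that they form a clique of size at least four which is vertex-disjoint from the $2k+2$ disjoint maximal cliques of $H_k$; this enlarges the packing to $2k+3$ and certifies exactly the extra unit of the lower bound (joining $e_1,e_2$ to $d''$ and $d'''$ keeps the graph connected and $4$-chordal). Your proposal is missing this second case, and it is the crux of extending the construction to all residues. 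A smaller omission: for $n\in\{5,6,7\}$ there is no Andreae--Flotow base graph of at most seven vertices to pad, so the reduction to the largest $n_0\equiv 1\pmod 7$ below $n$ fails there as well; the paper handles these cases separately with $K_n$.
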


\begin{proof}
If $n\in\{5,6,7\}$, the expression $\lfloor 2(n-1)/7\rfloor$ is equal to one and
the complete graph $K_n$ shows that the assertion of the proposition is true.

\begin{figure}
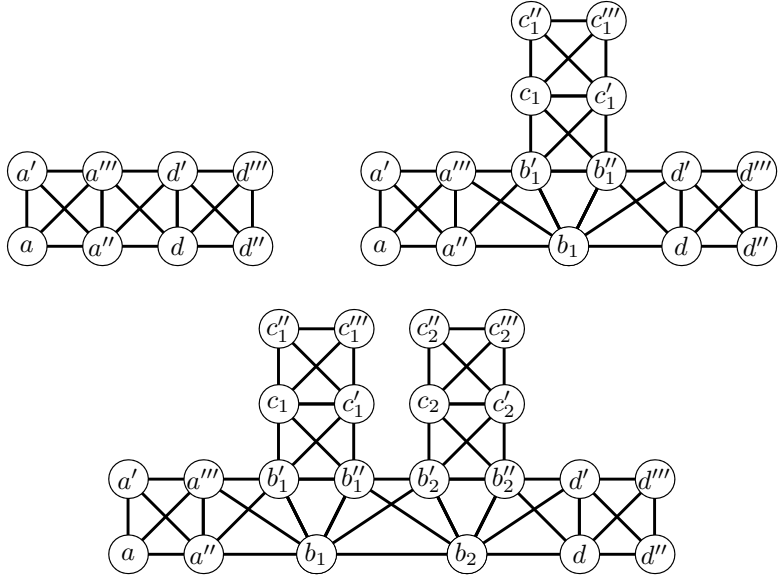

\begin{center}
\epsfbox{chord-k4.1} \hskip 10mm
\epsfbox{chord-k4.2} \vskip 5mm
\epsfbox{chord-k4.3}
\end{center}
\caption{The graphs $H_0$, $H_1$ and $H_2$.}
\label{fig-lower1}
\end{figure}

We next recall the construction presented in~\cite{bib-andreae96+}.
Let $H_k$, $k\ge 0$, be the following graph with $7k+8$ vertices (see Figure~\ref{fig-lower1}).
The vertex set of $H_k$ is formed from $7k$ vertices $b_i,b'_i,b''_i,c_i,c'_i,c''_i,c'''_i$, $i\in\{1,\ldots,k\}$, and 
eight vertices $a,a',a'',a'''$ and $d,d',d'',d'''$.
The vertices $a,a',a'',a'''$ form a $4$-clique;
each triple of vertices $b_i,b'_i,b''_i$, $i\in\{1,\ldots,k\}$, forms a $3$-clique;
each quadruple of vertices $c_i,c'_i,c''_i,c'''_i$, $i\in\{1,\ldots,k\}$, forms a $4$-clique; and
the vertices $d,d',d'',d'''$ also form a $4$-clique.
Note that these $2k+2$ cliques are vertex disjoint.
In addition, $H_k$ contains $4$-cliques formed by vertices $b'_i,b''_i,c_i,c'_i$ for $i\in\{1,\ldots,k\}$; 
vertices $b_i,b''_i,b_{i+1},b'_{i+1}$ for $i\in\{1,\ldots,k-1\}$; 
and finally vertices $a'',a''',b_1,b'_1$, and $b_k,b''_k,d,d'$.
Observe that $H_k$ is a $4$-chordal graph with $7k+8$ vertices and $2k+2$ disjoint maximal cliques.
This proves the proposition for $n\mod 7=1$.

For $n\mod 7\not=1$, we proceed as follows.
Let $k$ be the largest integer such that $7k+8\le n$ and let $z=(n-1)\mod 7$.
If $z\le 3$, we add $z$ new vertices $e_1,\ldots,e_z$ to the graph $H_k$ and
join each of them to all of the four vertices $d,d',d'',d'''$.
The resulting graph is $4$-chordal and its clique transversal number is at least $2k+2=\lfloor 2(n-1)/7\rfloor$.
If $z\ge 4$, we add $z$ new vertices $e_1,\ldots,e_z$ to the graph $H_k$ such that the $z$ vertices are mutually adjacent, 
and join the vertices $e_1$, $e_2$ to both $d''$ and $d'''$.
The resulting graph is $4$-chordal. Since the clique formed by the vertices $e_1,\ldots,e_z$
is vertex-disjoint from the $2k+2$ disjoint maximal cliques of $H_k$ identified earlier,
the clique transversal number of the resulting graph is at least $2k+3=\lfloor 2(n-1)/7\rfloor$.
\end{proof}

\section*{Acknowledgement}

The first author would like to thank the Undergraduate Research Support Scheme of the University of Warwick
for support provided to him during the summer 2015 while carrying out this work.

\end{document}